\documentclass[12pt]{amsart}
\usepackage{amsmath,amsthm,amssymb,amsfonts}
\usepackage[dvipsnames]{xcolor}
\usepackage{tikz}
\usepackage[colorlinks=true,
            linkcolor=violet,
            urlcolor=Aquamarine,
            citecolor=YellowOrange]{hyperref}

\newcommand{\norm}[1]{\left\Vert#1\right\Vert}

\allowdisplaybreaks

\def\R{\mathbb{R}}

\def\n{\mathbf{n}}

\newcommand{\abs}[1]{\left\vert#1\right\vert}
\newcommand{\set}[1]{\left\{#1\right\}}
\newcommand{\pfrac}[2]{\frac{\partial #1}{\partial #2}}

\numberwithin{equation}{section}
\newtheorem{theorem}{Theorem}[section]
\newtheorem{lem}[theorem]{Lemma}
\newtheorem{thm}[theorem]{Theorem}
\newtheorem{pro}[theorem]{Proposition}
\newtheorem{cor}[theorem]{Corollary}
\newtheorem{defi}[theorem]{Definition}

\newcounter{Cnumber}

\makeatletter

\newcommand{\Rmnum}[1]{\expandafter\@slowromancap\romannumeral #1@}
\makeatother
\title[On the Ends of Willmore Surfaces with Curvature Decay]{\bf On the Ends of Willmore Surfaces with Curvature Decay}

\dedicatory{Dedicated to the memory of our teacher, Professor Weiyue Ding.}

\author{Yuxiang Li, Hao Yin}
\address{Department of Mathematical Sciences, Tsinghua University, People's Republic of China}
\email{liyuxiang@tsinghua.edu.cn}

\address{School of Mathematical Sciences, University of Science and Technology of China, People's Republic of China}
\email{haoyin@ustc.edu.cn}

\date{}

\begin{document}

\begin{abstract}
	For a properly embedded Willmore surface $\Sigma$ in $\mathbb R^3$, we prove that if the scale-invariant second fundamental form is sufficiently small near infinity, the surface has finitely many ends. Moreover, if this  scale-invariant quantity vanishes at infinity, or if there is only one end, the total $L^2$-norm of the second fundamental form is finite. %This work establishes a Willmore analog of a classical result for minimal surfaces by Meeks, P\'erez, and Ros.
\end{abstract}

\maketitle

\section{Introduction}

A Willmore surface $\Sigma\subset\mathbb{R}^3$ is a critical point 
of the Willmore functional
\[
W(\Sigma)=\frac14 \int_\Sigma |H|^2\, dV_\Sigma.
\]
It is well-known that minimal surfaces are Willmore.

In minimal surface theory, there is an extensive body of literature describing the global geometry of complete embedded surfaces in $\R^3$. In contrast, the analogous theory for Willmore surfaces is far less developed. Although Willmore surfaces enjoy conformal invariance and share many analytical similarities with minimal surfaces, the fourth-order nature of the Willmore equation makes their global behavior significantly more delicate.

In this paper, we provide an application of the three-circle theorem to the decay estimate of Willmore surfaces—in a way similar to, yet distinct from, the approach in \cite{Li-Yin}. We focus on properly embedded Willmore surfaces in $\R^3$ whose scale‑invariant second fundamental form $\abs{x}\abs{A}$ is sufficiently small near infinity.
Our main results are the following.

%\begin{thm}\label{thm:main1}
%Let $\Sigma\subset\R^3$ be a properly embedded Willmore surface.  Assume that 
%\begin{equation}
%	\label{eqn:main1}
%\lim_{|x|\rightarrow+\infty}|x||A|=0,
%\end{equation}
%where $|x|$ denotes the Euclidean norm of $x$ and $A$ is the second fundamental form of $\Sigma$. Then each end of $\Sigma$
%has finite $L^2$-norm of $A$ and \blue{the total number of ends is bounded}.
%\end{thm}

\begin{thm}\label{thm:main2}
	There exists a universal constant $\epsilon_0>0$ such that for any connected properly embedded Willmore surface $\Sigma\subset\R^3$, if  
\begin{equation}\label{eqn:main2}
\limsup_{|x|\rightarrow+\infty}|x||A|<\epsilon_0,
\end{equation}
then $\Sigma$ has finitely many ends. Moreover, if we further assume that the number of ends is $1$, or
\begin{equation}
	\label{eqn:main1}
\lim_{|x|\rightarrow+\infty}|x||A|=0,
\end{equation}
then $\|A\|_{L^2(\Sigma)}<+\infty$.
\end{thm}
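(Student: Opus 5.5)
We first use the smallness of $|x||A|$ to describe the large-scale structure of $\Sigma$. Fix $R_0$ with $|x||A|<\epsilon_0$ on $\Sigma\setminus B_{R_0}$. On each annulus $A_r=B_{2r}\setminus B_{r}$, $r\ge R_0$, the rescaled surface $r^{-1}(\Sigma\cap A_r)$ has $|A|\le C\epsilon_0$. Combined with properness and embeddedness, this means each component is a graph over a plane with small Lipschitz constant. Near the sphere $\partial B_r$ the radial function $|x|$ restricted to $\Sigma$ then has no critical points: at a critical point the tangent plane would be orthogonal to $x$, and small curvature forces the component to be nearly a plane through a point at distance $r$, contradicting tangency unless we have a graph over a plane orthogonal to $x$. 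To handle this case, we would instead use $\nabla^\Sigma |x|^2$ and the second derivative bound $\nabla^2|x|^2=2g+O(|x||A|)$, which gives strict convexity. So $|x|^2$ is a Morse-free exhaustion outside $B_{R_0}$, and $\Sigma\setminus B_{R_0}$ is a finite union of annular ends $E_1,\dots,E_k$, each diffeomorphic to $S^1\times[R_0,\infty)$ with slices $\Sigma\cap\partial B_r$ nearly great circles.

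Finiteness of $k$ is then the substantive point. Each slice $\Sigma\cap\partial B_r$ is a union of embedded curves of geodesic curvature close to $1/r$, hence nearly great circles, and pairwise disjoint. Two disjoint near-great circles on $S^2$ cannot both have curvature close to that of a great circle, since great circles intersect. So for $\epsilon_0$ small there is at most one component per sphere, giving $k\le 1$. However, the statement allows more ends, so I expect the slices are really nearly great circles only after accounting for graph geometry: a graph end can be planar (slice near a great circle) or catenoid-like. Disjointness on the sphere still forces parallel near-planar ends. I would instead bound $k$ by the monotonicity formula for Willmore surfaces (Simon's monotonicity). The area ratio $|\Sigma\cap B_r|/\pi r^2\approx k$ for large $r$, and Simon's inequality bounds $\pi^{-1}r^{-2}|\Sigma\cap B_r|$ by $C(W(\Sigma\cap B_{2r})+\ldots)$, where $W(\Sigma\cap B_{2r})$ is again controlled in terms of $k$. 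To close this, I would use the three-circle argument below to show that each end is asymptotically a graph $u$ over a fixed plane with $|\nabla u|\to$ small, so each end carries density $1$ at infinity, and obtain finiteness from a uniform density bound at one scale.

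The main step, which gives $\|A\|_{L^2}<\infty$, is a decay estimate on a single end via the three-circle theorem. In log-polar coordinates $t=\log|x|$ on an end, write the end as a normal graph over a cone or plane. The Willmore equation linearizes to a fourth-order elliptic operator whose kernel on $S^1\times\R$ splits into Fourier modes $e^{\pm\lambda t}$. The three-circle theorem applied to $f(t)=\|A\|_{L^2(\Sigma\cap\{t\in[T,T+1]\})}$ (rescaled) gives trichotomy-type convexity: $f$ is either exponentially growing, exponentially decaying, or fits an exceptional finite-dimensional set of slowly varying modes. Growth is excluded by $|x||A|<\epsilon_0$. Under \eqref{eqn:main1}, $f\to0$, so the slow modes cannot persist without decay; hence $f(t)\le Ce^{-\delta t}$ and $\int|A|^2=\sum f(t)^2<\infty$. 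With one end, the slow (zero-frequency) mode corresponds to rotation or tilting of the asymptotic plane or to logarithmic growth. I would rule it out by the flux/balancing identity for Willmore surfaces: the Noether flux of the translation invariance through a slice must vanish for a single end, because it equals the flux through a compact region.

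I expect the hardest part to be the three-circle step: choosing the correct norm and the correct spectral gap for the linearized Willmore operator near a plane, since the zero modes are exactly the borderline case between the two hypotheses.
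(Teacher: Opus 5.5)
\textbf{Main gap: the borderline mode and the wrong flux.} The decay step fails at exactly the point you call borderline. For the linearized (biharmonic) problem, the only mode on which the three-circle inequality fails is the resonant first-Fourier-mode term $D_1 r\log r\cos\theta+\tilde D_1 r\log r\sin\theta$. In log-polar coordinates this is $te^{t}\cos\theta$, which is not one of your $e^{\pm\lambda t}$ modes, and it is not a zero-frequency mode. For it, $\abs{\nabla^2u}^2=2D_1^2/r^2$, so every annulus $B_{R^{i+1}}\setminus B_{R^i}$ carries the same energy. Every other mode obeys Lemma~\ref{lem:lem32}.

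The conserved quantity that detects this mode is the \emph{rotational} residue $\tau_2(\Sigma,S)$, for the two rotations $S$ that tilt the asymptotic plane. Its linearization is $\int_0^{2\pi}\cos\theta\,(-r\partial_r\Delta u+\Delta u)\,d\theta$, a nonzero multiple of $D_1$ (see \eqref{eqn:nobad}). The translation flux you propose linearizes to a multiple of $\int_{\partial B_r}\partial_r\Delta u\,ds$ (and to zero for tangential translations). That only sees the coefficient $D_0$ of $r^2\log r$, a growing mode already excluded by the bound on $\abs{x}\abs{A}$. Its vanishing does not kill $D_1$, so your one-end argument does not close. The repair is to use $\tau_2$, which vanishes for one end by the same compact-region reasoning.

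\textbf{The case \eqref{eqn:main1}.} Here ``$f\to0$, so slow modes cannot persist'' is not an argument. The three-circle inequality comes from a normalized blow-up on each triple of annuli. A surface whose normalized profile is close to the $D_1$ mode on infinitely many triples is not excluded merely by $f\to0$. What excludes it is that the $D_1$ amplitude is pinned by the $r$-independent quantity $\tau_2$. Moreover $\tau_2=0$, because its integrand over $\Sigma_0\cap\partial B_r$ is $o(1)$ as $r\to\infty$ when $\abs{x}\abs{A}\to0$ (using $\epsilon$-regularity for $\nabla H$). Only then does Theorem~\ref{thm:3circle} apply on every triple and give $W_i\le Ce^{-\alpha i}$.

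\textbf{Finiteness of ends.} Strict convexity of $\abs{x}^2|_\Sigma$, from $\nabla^2_\Sigma\tfrac12\abs{x}^2=g+\langle x,\n\rangle A$, does not exclude critical points. It only makes them nondegenerate local minima. You must add that, since $\Sigma$ is connected and meets $B_{R_0}$, a component of $\Sigma\cap\bar B_r$ born at such a minimum could never join the rest, because joining requires an index-one point. With this, $\Sigma\setminus B_{R_0}\cong(\Sigma\cap\partial B_{R_0})\times[R_0,\infty)$, and finiteness follows at once from compactness of the slice. This is a more elementary route than the paper's, which gets transversality from Proposition~\ref{prop:trans}, hence from Proposition~\ref{prop:app1} and the three-circle theorem. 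Your later claim that disjoint near-great circles force $k\le1$ is false: parallel ends give disjoint curves near the same great circle. The monotonicity detour is unnecessary.

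\textbf{Missing hypothesis (A2).} Absence of critical points does not give what the decay step needs: each end must be $\epsilon_1$-close to a plane through the origin on every rescaled annulus $B_{R^{i+3}}\setminus B_{R^i}$. That requires $\abs{\langle\n_x,x/\abs{x}\rangle}\le\delta$ and slices close to great circles, as in Proposition~\ref{prop:trans}, Lemma~\ref{lem:42} and Corollary~\ref{cor:circle}. You assert this without proof; your graph claim on each annulus and your near-great-circle claim both presuppose it. One way to supply it with your own tool: if the tangent plane at $x$ were at distance $\ge\delta\abs{x}$ from the origin, the disk-growing argument of Proposition~\ref{prop:trans} would give a graph over a large disk in that plane. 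On that graph, $\abs{x}$ would have an interior local minimum outside $B_{R_0}$, contradicting your Morse conclusion.
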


Combining the results in \cite{Chen-Lamm} and \cite{Li}, we obtain the following corollaries.

\begin{cor}
Let $\Sigma\subset\R^3$ be a properly embedded Willmore plane satisfying \eqref{eqn:main2}. Then $\Sigma$ is an affine plane.
\end{cor}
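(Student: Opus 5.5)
Since $\Sigma$ is a plane, i.e.\ topologically $\R^2$, it is connected and has exactly one end. The plan is to reduce the Corollary to the finite-energy case via Theorem~\ref{thm:main2}, and then appeal to the existing rigidity results for finite-energy Willmore planes in \cite{Chen-Lamm} and \cite{Li}.

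\emph{Step 1: finite total curvature.} Because $\Sigma$ satisfies \eqref{eqn:main2} and has one end, the ``moreover'' part of Theorem~\ref{thm:main2} applies directly and gives $\|A\|_{L^2(\Sigma)}<+\infty$. The hypothesis that $\Sigma$ is properly embedded and the decay $|x||A|<\epsilon_0$ near infinity are exactly what that theorem requires, so nothing extra is needed here.

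\emph{Step 2: rigidity of finite-energy Willmore planes.} I would now invoke the classification of \cite{Chen-Lamm} and \cite{Li}, in the form: a complete properly immersed (in particular properly embedded) Willmore plane in $\R^3$ with $\int_\Sigma|A|^2<\infty$ is an affine plane. If those results are not literally stated in this form, I would reassemble the argument as follows.
\begin{itemize}
\item Finite $\int|A|^2$ together with properness shows, via Huber/M\"uller--\v{S}ver\'ak type arguments, that the single end is conformally a punctured disk with polynomial area growth.
\item Inverting at a point $x_0\notin\Sigma$ turns $\Sigma$ into a Willmore sphere, possibly with one branch point at the image of infinity. Conformal invariance keeps the surface Willmore, and the removable-singularity theorem for Willmore surfaces of finite energy (Kuwert--Sch\"atzle, Rivi\`ere) extends it across the point.
\item Bryant's classification, extended to the branched case, shows that this sphere is the inversion of a complete minimal surface with planar ends.
\item Pulling back, $\Sigma$ itself is a minimal surface, or the inversion of one, with finite total curvature, one end, and genus zero.
\item Embeddedness and the single end then force it to be an affine plane: for a minimal surface this follows from Jorge--Meeks or Schoen. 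A round sphere is excluded because $\Sigma$ is non-compact. The remaining inverted options are excluded because they would produce a compact surface through the inversion point, or an end that is not embedded.
\end{itemize}

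\emph{Main obstacle.} The substantive work is entirely in Theorem~\ref{thm:main2}. Within the Corollary itself, the delicate point is checking that the hypotheses of \cite{Chen-Lamm} and \cite{Li} match ours, namely proper embeddedness, the planar topology, and finite $\|A\|_{L^2}$ with no additional smallness required. I expect this matching to hold, with Step~1 supplying exactly the missing finite-energy assumption.
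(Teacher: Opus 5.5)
Your argument is the paper's: a plane is connected with exactly one end, so the ``moreover'' part of Theorem~\ref{thm:main2} gives $\|A\|_{L^2(\Sigma)}<\infty$, and the known rigidity of properly embedded Willmore planes with $\int_\Sigma|A|^2<\infty$ (imported from \cite{Chen-Lamm} and \cite{Li}) then forces $\Sigma$ to be an affine plane. Do state that rigidity result only for embedded planes: the ``properly immersed'' version you wrote is false, since inverting a non-round immersed Willmore sphere (e.g.\ one of Bryant's with $W=16\pi$) about a simple point gives a non-flat properly immersed Willmore plane with finite $\int|A|^2$; your backup sketch is not needed.
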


\begin{cor}
Let 
 $\Sigma\subset\R^3$ be a graph over $\R^2$ satisfying \eqref{eqn:main2}. Then 
$\Sigma$ is an affine plane.
\end{cor}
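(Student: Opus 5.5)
The plan is to reduce the corollary to the one-end case of Theorem \ref{thm:main2}, and then apply the rigidity theorem for entire Willmore graphs from \cite{Chen-Lamm}.

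First, I check that a graph $\Sigma=\{(x,u(x)):x\in\R^2\}$ of a smooth function $u:\R^2\to\R$ meets the hypotheses of Theorem \ref{thm:main2}. It is connected, since it is homeomorphic to $\R^2$. It is embedded, since it is the image of an injective immersion. It is properly embedded because the map $x\mapsto (x,u(x))$ is proper: $|(x,u(x))|\ge |x|$, so the preimage of any bounded set of $\R^3$ is bounded, and it is closed by continuity. Assumption \eqref{eqn:main2} is given. Theorem \ref{thm:main2} therefore applies, and $\Sigma$ has finitely many ends.

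Second, I show that $\Sigma$ has exactly one end; this is the only point needing a little care. For $R>0$, the part of $\Sigma$ outside the ball $B_R\subset\R^3$ is parametrized by
\[
\Omega_R=\{x\in\R^2: |x|^2+u(x)^2>R^2\}.
\]
This open set contains $\{|x|>R\}$. Its complement $K_R=\{|x|^2+u(x)^2\le R^2\}$ is a compact subset of the closed disc $\overline{D_R}$.
\begin{itemize}
\item $\Omega_R$ may still have several components: bounded "holes" inside $D_R$ where $|u|$ is large.
\item Every unbounded component of $\Omega_R$ must meet, and hence contain, the connected annulus $\{|x|>R\}$.
\end{itemize}
So there is exactly one unbounded component for every $R$, and $\Sigma$ has exactly one end. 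By the second part of Theorem \ref{thm:main2}, in the one-end case, we conclude $\|A\|_{L^2(\Sigma)}<+\infty$.

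Finally, I invoke \cite{Chen-Lamm}: an entire Willmore graph over $\R^2$ whose second fundamental form has finite $L^2$ norm (equivalently, finite Willmore energy, by the Gauss–Bonnet and graph structure) is an affine plane. This gives the corollary.

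The main obstacle is bookkeeping rather than analysis. One must match the finiteness hypothesis in \cite{Chen-Lamm} exactly, whether it is stated in terms of $\int|A|^2$ or $\int|H|^2$. Since $|H|^2\le 2|A|^2$, finite $\|A\|_{L^2}$ implies finite Willmore energy, so either form of their hypothesis is covered.
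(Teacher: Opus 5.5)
Your proposal is correct and follows the paper's intended route. The entire Willmore graph is a connected, properly embedded surface with exactly one end; your argument via the unique unbounded component of $\Omega_R$ handles this. The one-end case of Theorem \ref{thm:main2} then gives $\|A\|_{L^2(\Sigma)}<+\infty$, and \cite{Chen-Lamm} finishes the proof. Alternatively, since an entire graph is a properly embedded plane, you could use the result of \cite{Li}.

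Your parenthetical claim that finite $\|A\|_{L^2}$ is equivalent to finite Willmore energy for graphs is not justified as written. You only use the trivial direction $|H|^2\le 2|A|^2$, so you can simply drop it.
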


The first part of Theorem \ref{thm:main2} is optimal in the sense that there are examples (due to Kapouleas \cite{ka}) of properly embedded connected minimal surfaces in $\R^3$ with arbitrary (though finitely many) number of planar/catenoidal ends. 

For the second claim in Theorem \ref{thm:main2}, an analogous result was proved in \cite{Meeks-Perez-Ros} for minimal surfaces. More precisely,  without assuming that the number of ends is finite,  Meeks-P\'erez-Ros    showed  that  if $|A|\leq \frac{C}{d(x,x_0)}$ for some $x_0\in\Sigma$, then $\int_\Sigma|A|^2<+\infty$, where $d$ denotes the intrinsic distance on $\Sigma$. It is a natural question whether an analogous statement remains true for Willmore surfaces. In \cite{Li-Yin-Zhou}, for any $c>0$, the authors had an example of a Willmore surface which satisfies
\begin{itemize}
    \item it is properly embedded in $\R^3\setminus \set{0}$;
    \item it has one end at the infinity and a singularity at the origin;
    \item for any $x$ on the surface, $\abs{x}\abs{A(x)}$ is independent of $\abs{x}$;
    \item $\max_{\abs{x}=1} \abs{A(x)}\leq c$.
\end{itemize}
The example implies that the extension of Meeks-P\'erez-Ros's result for Willmore surfaces, if exists, cannot be a local theorem.

A key component in the proof of Theorem \ref{thm:main2} is a three-circle type argument (see Theorem \ref{thm:3circle}). As a first step, we prove that outside a large ball, $\Sigma$ has finitely many connected components and each component when restricted to $B_{2R}\setminus B_R$ (for large $R$) is close to a flat annulus. In the second step, under the additional assumption of only one end, or \eqref{eqn:main1}, we use Theorem \ref{thm:3circle} to show fast decay of $\abs{A}$ which implies the finiteness of $\norm{A}_{L^2(\Sigma)}$. Note that a corollary of Theorem \ref{thm:3circle} is used in the first step (see Proposition \ref{prop:app1}).

The rest of the paper is organized as follows. In Section \ref{sec:pre}, we state a few well-known facts for later use. The proof of Theorem \ref{thm:3circle} and its first application (Proposition \ref{prop:app1}) are in Section \ref{sec:3circle}. In the final section, we prove the main theorem.

{\bf Acknowledgement.} Yuxiang Li is partially supported by National Key R\&D Program of China 2022YFA1005400. Hao Yin is supported by NSFC-12431003.

\section{Preliminaries}
\label{sec:pre}

In this section, we list a few results that are well-known and used in this paper.

\subsection{Graph lemma}

By definition, every surface is the graph of a function in a sufficiently small neighborhood. However, we need quantitative description for our proofs. Hence, we define
\begin{defi}
\label{def:close}
    A piece of surface $\Sigma$ is said to be $\delta$-close to a plane $P$ if the orthogonal projection $\pi$ from $\Sigma$ to $\pi(\Sigma)\subset P$ is a diffeomorphism and there exists a function $u:\pi(\Sigma)\to \R$ satisfying
    \[
    \norm{u}_{C^2(\pi(\Sigma))}\leq \delta
    \]
    such that $\Sigma$ (after a rotation in $\R^3$) is the graph of $u$.
\end{defi}
Note that this definition is not scaling invariant. We also define
\begin{defi}\label{def:closedball}
	A properly embedded surface $\Sigma$ is said to be $\delta$-close to a flat disk of radius $r$ centered at $x\in \Sigma$, if there is a neighborhood $U$ of $x$ in $\Sigma$ and a plane $P$ such that the orthogonal projection onto $P$ is a diffeomorphism from $U$ to $D^P_r$ and $U$ is (up to a rotation and a translation if necessary) the graph of a function $u:D^P_r \to \R$ satisfying 
	\begin{itemize}
		\item $\sup_{D_r}\abs{\nabla u} \leq \delta$;
		\item $\sup_{D_r}\abs{\nabla^{k+1} u}\leq r^{-k}\delta$ for $k\geq 1$.
	\end{itemize}
\end{defi}
Definition \ref{def:closedball} is very close to the so-called $(r,\alpha)$-immersion defined in \cite{Langer}. One obvious difference is that it requires a bound for higher derivatives of $u$ instead of the $C^1$ norm as in \cite{Langer}. This is not a problem, since we will always work with Willmore surfaces and the $\epsilon$-regularity theorem allows us to use any $C^k$ norm. Theorem 2.4 of \cite{Langer} and the $\epsilon$-regularity of Willmore surfaces together imply the following lemma.
\begin{lem}\label{lem:graph}
    Let $\Sigma$ be a properly embedded Willmore surface in $\R^3$ and $x\in \Sigma$. If
    \[
    \sup_{y\in \Sigma\cap B_{r+1}(x)} r\abs{A}(y)\leq \epsilon,
    \]
    then $\Sigma$ is $\delta(\epsilon)$-close to a flat disk of radius $r$ centered at $x$, where
    \[
    \lim_{\epsilon\to 0} \delta(\epsilon)=0.
    \]
\end{lem}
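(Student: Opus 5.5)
The plan is to combine Langer's graph representation (Theorem 2.4 of \cite{Langer}) with $\epsilon$-regularity for Willmore surfaces. First rescale. Set $\tilde\Sigma = r^{-1}(\Sigma - x)$. Then $|\tilde A| = r|A|$, so on $\tilde\Sigma\cap B_{1+1/r}(0)$ we have $|\tilde A|\le \epsilon$. In particular $|\tilde A|\le\epsilon$ on $\tilde\Sigma\cap B_1(0)$, and $\tilde\Sigma$ is again a properly embedded Willmore surface, by scale invariance of the Willmore equation. Definition \ref{def:closedball} is scale invariant in the right way: if $\tilde\Sigma$ is $\delta$-close to a flat disk of radius $1$ at $0$ with graph function $\tilde u$, then $u(y)=r\tilde u(y/r)$ satisfies $|\nabla u|\le\delta$ and $|\nabla^{k+1}u| = r^{-k}|\nabla^{k+1}\tilde u|\le r^{-k}\delta$. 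So it suffices to treat $r=1$.

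Next, since $|\tilde A|\le\epsilon$ pointwise on $\tilde\Sigma\cap B_1$ and $\tilde\Sigma$ is properly embedded, Langer's argument applies near $0$. The unit normal varies by at most $C\epsilon$ along intrinsic paths of bounded length. Starting from the tangent plane $P=T_0\tilde\Sigma$, this lets us continue the local graph representation over $P$. The result is that the component $U$ of $\tilde\Sigma\cap (D^P_{\rho}\times\R)$ containing $0$ is a graph of $\tilde u$ over a disk of radius $\rho$ close to $1$, with $|\nabla\tilde u|\le C\epsilon$ and $|\nabla^2\tilde u|\le C\epsilon$.

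There is a mismatch here. The hypothesis only gives $|\tilde A|\le\epsilon$ on the ball of radius $1+1/r$, not of radius $2$, so for large $r$ there is barely any room beyond radius $1$. I would handle this by first applying Langer on the disk of radius $1$, using the $C^1$ control coming directly from $|A|\le\epsilon$ out to radius $1$. I would then use the small extra margin $1/r$, together with interior estimates applied with radius comparable to $\min(1,1/r)$ in the original scale (that is, $1$ unit in the unscaled picture), to get higher derivative control up to the boundary of $D_1$.

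The $\epsilon$-regularity step supplies the higher derivatives. Fix a point $p$ of the graph over $D^P_1$. In the original scale, $|A|\le\epsilon/r$ on $B_1(p)$, which is contained in $B_{r+1}(x)$. Hence $\int_{\Sigma\cap B_1(p)}|A|^2\le C\epsilon^2 r^{-2}\cdot\mathrm{Area}$. The area is bounded by the graph description, perhaps after shrinking to radius $\min(1,r)$. The $\epsilon$-regularity for Willmore surfaces (Kuwert--Sch\"atzle) then gives $|\nabla^k A|(p)\le C_k\,\epsilon\, s^{-k-1}$, where $s=\min(1,r)$ is the available radius. Converting back, $|\nabla^{k+1}u|\le C_k\epsilon\,r^{-k}$ in the regime $r\ge1$ requires the $A$-bound $\epsilon/r$ combined with scale-$1$ regularity. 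For $r<1$ everything rescales directly.

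Finally, set $\delta(\epsilon)=C\epsilon$ summed over finitely many relevant $k$, or take the supremum of $C_k\epsilon$ for all $k$ in the sense of the definition. This tends to $0$ as $\epsilon\to0$.

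The main obstacle is the bookkeeping of the margin $B_{r+1}$ versus $B_r$. The graph must extend exactly to radius $r$ with uniform higher-order bounds, which requires $\epsilon$-regularity on balls of fixed size $1$ around points near the boundary of $D_r$. Getting the $r^{-k}$ scaling from this needs care when $r$ is large, since a fixed-radius-$1$ interior estimate naturally gives $\epsilon r^{-1}$ rather than $\epsilon r^{-k}$. I would resolve this by applying $\epsilon$-regularity on balls of radius comparable to the distance to the edge plus $1$, iterating inward.
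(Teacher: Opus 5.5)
Your route is the one the paper has in mind: its whole justification is a one-sentence appeal to Langer's Theorem 2.4 plus $\epsilon$-regularity. You also identified the right weak spot. But the margin problem is not bookkeeping, and your proposed fix ($\epsilon$-regularity on balls of radius ``distance to the edge plus $1$'', iterating inward) cannot work. The bound $\abs{\nabla^{k+1}u}\le\delta r^{-k}$ is demanded at \emph{every} point of $D_r$. Over a point within distance $O(1)$ of $\partial D_r$, the hypothesis controls only a ball of radius about $1$, on which $\abs{A}\le\epsilon/r$. Interior estimates then give at best $\abs{\nabla^{k+1}u}\lesssim C_k\epsilon/r$, and this is sharp. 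So for $k\ge 2$ and large $r$ the lemma as literally stated is false; the paper's one-line justification has the same defect.

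Concretely, take Scherk's singly periodic surface $\set{\sin z=\sinh x\sinh y}$, which is properly embedded and minimal, hence Willmore. Its part with $x>0$ is the graph $y=u(x,z)=\operatorname{arcsinh}(\sin z/\sinh x)=2e^{-x}\sin z+O(e^{-3x})$, on which $\abs{A}\approx 2\sqrt2\,e^{-x}$ and $\abs{\nabla^3u}\approx 4e^{-x}$. Put $x_0$ on this wing at $x=X$ with $2\sqrt2\,r\,e^{-(X-r-1)}=\epsilon$. Since $X-r-1=\log(2\sqrt2\,r/\epsilon)\to\infty$, every point of $B_{r+1}(x_0)$ has $x>X-r-1$, so the ball meets only this wing and the hypothesis holds up to a factor $1+o(1)$. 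Yet at the edge of the disk of radius $r$ facing the $z$-axis ($x=X-r$) one has $\abs{\nabla^3u}\approx\frac{\sqrt2}{e}\cdot\frac{\epsilon}{r}$, while Definition \ref{def:closedball} requires $\abs{\nabla^3u}\le\delta r^{-2}$. Thus $\delta\gtrsim\epsilon r$, which is unbounded as $r\to\infty$, and tilting $P$ does not help.

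In your rescaled picture the margin is $1/r$, not $1$, so ``it suffices to treat $r=1$'' hides exactly this. The shrinking margin also undermines your graph step. A graph over $D_1$ with $\abs{\nabla^2\tilde u}\sim\epsilon$ (e.g.\ a large sphere) reaches distance about $1+c\epsilon^2$ from $0$, which lies outside $B_{1+1/r}$ once $r\gtrsim\epsilon^{-2}$. So the hypothesis does not even guarantee a graph over all of $D_1$; your ``radius $\rho$ close to $1$'' is not what the definition requires.

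What is true, and all the paper ever uses, is the version with a scale-invariant margin. In Proposition \ref{prop:app1} the radius is $\abs{y}/4$, and $\abs{z}\abs{A}\le\epsilon_2$ gives $r\abs{A}\le\epsilon_2/2$ on $B_{\abs{y}/2}(y)=B_{2r}(y)$. In Proposition \ref{prop:trans} the radius is $\abs{x}/5$, and the bound is available on $B_{2\abs{x}/5}(x)$. Under the hypothesis $\sup_{\Sigma\cap B_{2r}(x)}r\abs{A}\le\epsilon$ your argument goes through as written:
\begin{itemize}
\item after rescaling, $\abs{\tilde A}\le\epsilon$ on $B_2$;
\item the component through $0$ is a graph over $D^P_{3/2}$ with $\abs{\nabla\tilde u}\le C\epsilon$;
\item Kuwert--Sch\"atzle $\epsilon$-regularity on balls of a fixed radius (say $1/4$) centred on the graph over $D_1$ gives $\abs{\nabla^{k+1}\tilde u}\le C_k\epsilon$.
\end{itemize}
If you keep $B_{r+1}$, the statement is fine for $r\le 1$, where $B_{r+1}(x)\supset B_{2r}(x)$; for large $r$ one can only hope for $\delta$ depending on $r$.

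Finally, ``the supremum of $C_k\epsilon$ over all $k$'' is not available. $C_k\to\infty$ unavoidably: rescaled Scherk wings of short period show this even with a $B_{2r}$ margin. So a single $\delta$ controls only finitely many derivatives, which is all that is used later.
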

%\cmt{Maybe I need to be explicit that this $\delta(\epsilon)$ depends also on $r$, should be $\delta(\epsilon,r)$. In the proof, we can first scale $B_r(x)$ to $B_1$. The existence of some $\delta$ should be fine. When we scale everything back, the concept of $\delta$-close is not compatible with scaling. Hence, the dependence  of $\delta$ on $r$ would be strange. But any way, there is such $\delta(\epsilon,r)$.}

The following corollary establishes a link between Definition \ref{def:close} and Definition \ref{def:closedball}.
\begin{cor}
	\label{cor:twodef}
For $\delta'>0$ and $r>0$ fixed, if $\Sigma$ is $\delta$-close to a flat disk of radius $r$ centered at $x\in \Sigma$ (as in Definition \ref{def:closedball}) and $\delta$ is smaller than some constant determined by $\delta'$ and $r$, then the piece of surface $\Sigma|_U$ is $\delta'$-close to some plane $P$ (as in Definition \ref{def:close}).
\end{cor}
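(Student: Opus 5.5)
Since both definitions describe $U$ as a graph of a function $u$ over a planar region, the natural plan is to take the same plane $P$ and the same function $u:D^P_r\to\R$ given by Definition \ref{def:closedball}, and check that $u$ satisfies the $C^2$ bound required in Definition \ref{def:close}. The projection $\pi|_U:U\to D^P_r=\pi(U)$ is a diffeomorphism by assumption, so the only thing to verify is $\norm{u}_{C^2(D^P_r)}\le\delta'$.

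I will estimate the three pieces of the $C^2$ norm separately. After translating so that $x$ is the origin of $P$, I can also arrange $u(0)=0$, since $x\in U$ lies on the graph. First, $\sup\abs{\nabla u}\le\delta$ holds directly. Second, taking $k=1$ in the second condition of Definition \ref{def:closedball} gives $\sup\abs{\nabla^2 u}\le r^{-1}\delta$. Third, for the $C^0$ part I integrate along radial segments from the center: for $z\in D^P_r$ we get $\abs{u(z)}\le\abs{u(0)}+\abs{z}\sup\abs{\nabla u}\le r\delta$. Combining these,
\[
\norm{u}_{C^2(D^P_r)}\le \bigl(r+1+r^{-1}\bigr)\delta .
\]
So it suffices to choose $\delta\le \delta'/(r+1+r^{-1})$. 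This threshold depends only on $\delta'$ and $r$, which is exactly the form of the claim.

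There is no real obstacle here. The one point that needs care is normalization: Definition \ref{def:closedball} allows a translation, whereas Definition \ref{def:close} only mentions a rotation. I will interpret ``graph of $u$'' in Definition \ref{def:close} as being taken over the affine plane $P$ through $x$, so that the normalization $u(0)=0$ is legitimate. If the $C^2$ norm is meant to include the higher-derivative bounds with $k\ge 2$, those also scale like $r^{-k}\delta$, and the same argument goes through with a threshold that still depends only on $\delta'$ and $r$.
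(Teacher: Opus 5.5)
Your proposal is correct. The paper omits the proof as obvious, and your direct check is exactly the intended routine argument: keep the same plane translated to pass through $x$ and the same $u$, then $u(0)=0$ and $\norm{u}_{C^2(D^P_r)}\le (r+1+r^{-1})\delta$, so $\delta\le \delta'/(r+1+r^{-1})$ suffices.
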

This is obvious and the proof is omitted.

While Lemma \ref{lem:graph} allows us to control the geometry of a piece of surface with small $\abs{A}$, we also need the following result to combine smaller almost flat pieces together to get a larger one. The proof is very elementary and omitted.
\begin{pro}
	\label{pro:combine}
 For $r_1,r_2>0$ and any $\epsilon>0$, there exists $\delta>0$ such that the following holds. Let $\Sigma_1$ and $\Sigma_2$ be two pieces of surfaces which are $\delta$-close to the planes $P_1$ and $P_2$ respectively. If 
\begin{enumerate}
    \item [(1)] $\Sigma_1\cap B_{r_1}= \Sigma_2\cap B_{r_1}$;
    \item [(2)] the diameters of $\Sigma_1$ and $\Sigma_2$ are bounded by $r_2$,
\end{enumerate} 
 then there is another plane $P$ such that $\Sigma_1\cup \Sigma_2$ is $\epsilon$-close to $P$.
\end{pro}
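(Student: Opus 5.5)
We prove Proposition \ref{pro:combine} by writing both pieces as graphs over a single plane and estimating the graph function directly. The idea is that $\Sigma_1$ and $\Sigma_2$ share the nonempty piece $\Sigma_1\cap B_{r_1}$. On that piece each is a $C^2$-small graph over its own plane, so the planes $P_1$ and $P_2$ must be almost parallel. We will also assume, as the statement implicitly requires, that the common piece contains a point near the center of $B_{r_1}$. This is the situation in which the proposition is applied, namely $x\in\Sigma$ at the origin.

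\textbf{Step 1: the two planes are close.} Take $P=P_1$. Let $\nu_1,\nu_2$ be the unit normals of $P_1,P_2$, with signs chosen so that the normals to $\Sigma_i$ point to the same side.
\begin{itemize}
\item At a point $y\in\Sigma_1\cap B_{r_1}=\Sigma_2\cap B_{r_1}$, the unit normal of $\Sigma$ is within $C\delta$ of $\nu_1$, since $|\nabla u_1|\le\delta$. By the same argument it is within $C\delta$ of $\nu_2$.
\item Hence $|\nu_1-\nu_2|\le C\delta$, so the angle between $P_1$ and $P_2$ is $O(\delta)$.
\item The plane $P_i$ passes within distance $\delta$ of $y$, because $|u_i|\le\delta$.
\end{itemize}
Combining these with the diameter bound $r_2$, the two affine planes are $O(\delta(1+r_2))$-close in the Hausdorff sense on the relevant ball.

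\textbf{Step 2: $\Sigma_2$ is a graph over $P_1$.} Rotating the graph of $u_2$ by the small angle $O(\delta)$ gives a surface whose normal is within $O(\delta)$ of $\nu_1$. On a region of diameter at most $r_2$, the implicit function theorem therefore lets us write $\Sigma_2$ as the graph over $P_1$ of a function $\tilde u_2$ with $\|\tilde u_2\|_{C^2}\le C(r_2)\delta$. The second-derivative bound follows from the chain rule, because the change of coordinates is $C^2$-close to the identity.

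\textbf{Step 3: the union is a graph.} The projection $\pi_1$ must be injective on $\Sigma_1\cup\Sigma_2$. Injectivity on each piece separately is already given, so the only danger is a point of $\Sigma_1$ and a point of $\Sigma_2$ lying over the same point of $P_1$. On the overlap $B_{r_1}$ the two surfaces coincide, so $u_1=\tilde u_2$ there. Away from the overlap, the projections $\pi_1(\Sigma_1\setminus B_{r_1})$ and $\pi_1(\Sigma_2\setminus B_{r_1})$ could a priori intersect. This is the main obstacle.

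To handle it, we combine two facts. Both graphs have height at most $C\delta$ over $P_1$ near the origin, and both have slope at most $C\delta$. Both $u_1$ and $\tilde u_2$ solve the same graph description on a connected neighborhood of $\pi_1(\Sigma_1\cap B_{r_1})$. We then argue by continuation that $\{u_1=\tilde u_2\}$ is open and closed on the connected set $\pi_1(\Sigma_1)\cap\pi_1(\Sigma_2)$, under the natural proper-embedding and connectedness assumptions implicit in the applications. Consequently the union is the graph of a single function $u$ defined on $\pi_1(\Sigma_1)\cup\pi_1(\Sigma_2)$, with $\|u\|_{C^2}\le C(r_1,r_2)\delta$.

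Finally, choosing $\delta$ small enough that $C(r_1,r_2)\delta\le\epsilon$ completes the proof.
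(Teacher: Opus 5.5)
The gap is in Step 3, and it is essential. The two facts you need there are that the coincidence set $\{u_1=\tilde u_2\}$ is open, and that $\pi_1(\Sigma_1)\cap\pi_1(\Sigma_2)$ is connected. Neither follows from the hypotheses, and without them the proposition is false as literally stated. (The paper omits the proof; the needed hypotheses are only implicit in how it is used.)

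Openness needs something like: $\Sigma_1,\Sigma_2$ are relatively open pieces of one embedded surface, so that at a common point both contain the unique local sheet through it. ``Solving the same graph description'' supplies no mechanism, since no equation is assumed.

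Connectedness fails even for pieces of a single properly embedded Willmore surface whose overlap contains the center of $B_{r_1}$. Take the helicoid $(r\cos\theta,r\sin\theta,c\theta)$ with $c=\delta/(10\pi)$, restricted to $1<r<2$. Let $\Sigma_1,\Sigma_2$ be the parts with $\theta\in(0,2\pi-\eta)$ and $\theta\in(\pi,3\pi-\eta)$.
\begin{itemize}
\item Each is a graph over a horizontal plane with $C^2$ norm $O(c)\le\delta$, and both have diameter less than $5$.
\item They coincide inside the ball of radius $2/5$ about $(0,-3/2,3\pi c/2)$.
\item But $\pi(\Sigma_1)\cap\pi(\Sigma_2)$ has two components. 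Over the one with angular positions in $(0,\pi-\eta)$, the two pieces are different sheets at vertical distance $2\pi c$.
\end{itemize}
So $\Sigma_1\cup\Sigma_2$ (one and a half turns) is not a graph over any nearly horizontal plane. Hence it is not $\epsilon$-close to any plane once $\epsilon$ is small, for every $\delta$. Every assumption you call implicit holds here: proper embedding, connectedness of the pieces, overlap near the center. What is missing is control on the shape of the domains.

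The same issue affects Step 2. The implicit function theorem only gives local graphs, and injectivity of $\pi_1$ on $\Sigma_2$ can fail when $\pi_2(\Sigma_2)$ is not convex. For example, a single turn of the surface above over a narrowly slit annulus, projected to a plane tilted by $O(\delta)$, overlaps itself across the slit.

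If $\pi_2(\Sigma_2)$ is convex, injectivity follows in one line. Let $\phi=O(\delta)$ be the angle between $P_1$ and $P_2$. Then $\pi_1(p)=\pi_1(q)$ forces
$|\xi_p-\xi_q|\le\tan\phi\,|u_2(\xi_p)-u_2(\xi_q)|\le C\delta^2|\xi_p-\xi_q|$,
where the last step uses the segment $[\xi_p,\xi_q]\subset\pi_2(\Sigma_2)$.

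For a correct statement you must add explicitly what the applications provide:
\begin{itemize}
\item $\Sigma_1,\Sigma_2$ are relatively open in a common embedded surface;
\item each is a graph over a convex, nearly round disk, as in Definition \ref{def:closedball};
\item the boundary circles of the two domains cross at an angle bounded below in terms of $r_1,r_2$. This is true, for instance, for the balls in the proof of Proposition \ref{prop:trans}, and it makes $\pi_1(\Sigma_1)\cap\pi_1(\Sigma_2)$ connected for $\delta$ small.
\end{itemize}
With these hypotheses your Steps 1--3 go through, including the open--closed continuation.
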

Note that in the above proposition, $\delta$ depends on $\epsilon$, $r_1$, and $r_2$. Hence, to keep $\delta$ under control, we can apply this proposition only a uniformly bounded number of times, for values of $r_1$ and $r_2$ that are of uniformly comparable sizes.

\subsection{A three-circle lemma for biharmonic function}
Willmore surfces are naturally related to biharmonic functions in the following sense. If the surface is given by the graph of a function $u$, the Willmore equation is 
$$
	\Delta H + \frac{1}{2} H(H^2- 4K)=0
$$
where $H$ and $K$ are expressed in terms $u$ by
\begin{equation}
	\label{eqn:H}
H=\frac{\left(1+u_y^2\right) u_{x x}-2 u_x u_y u_{x y}+\left(1+u_x^2\right) u_{y y}}{\left(1+u_x^2+u_y^2\right)^{3 / 2}}
\end{equation}
and
\begin{equation}
	\label{eqn:K}
K=\frac{u_{x x} u_{y y}-u_{x y}^2}{\left(1+u_x^2+u_y^2\right)^2}.
\end{equation}
The Willmore equation in terms of $u$ is very complicated, however, its linearization at $u=0$ is the biharmonic map equation
$$
	\Delta^2 u =0.
$$
Therefore, if the surface is very close to (a part of) a plane, or equivalently, $u$ is small in reasonable norms, certain behaviors of $u$ is approximately those of a biharmonic function. Also in terms of expansions near $u=0$, the leading term in $H$ is $\Delta u$ and the leading term in $\abs{A}^2$ is $\abs{\nabla^2 u}^2$.

Recall that if $u$ is biharmonic on an annulus of $\mathbb R^2$, there is the well-known expansion (in polar coordinates)
\begin{equation}
	\label{eqn:biexp}
	u(r,\theta) = u_0(r) + \sum_{n=1}^\infty \left[ u_n^c(r) \cos (n\theta) + u_n^s(r) \sin (n\theta) \right]
\end{equation}
where
$$
\begin{aligned}
& u_0(r)=A_0+B_0 \ln r+C_0 r^2+D_0 r^2 \ln r\\
& u_1^{(c)}(r)=A_1 r+B_1 r^{-1}+C_1 r^3+D_1 r \ln r \\
& u_1^{(s)}(r)=\tilde{A}_1 r+\tilde{B}_1 r^{-1}+\tilde{C}_1 r^3+\tilde{D}_1 r \ln r
\end{aligned}
$$
and for $n\geq 2$
$$
\begin{aligned}
& u_n^{(c)}(r)=A_n r^n+B_n r^{-n}+C_n r^{n+2}+D_n r^{2-n} \\
& u_n^{(s)}(r)=\tilde{A}_n r^n+\tilde{B}_n r^{-n}+\tilde{C}_n r^{n+2}+\tilde{D}_n r^{2-n}.
\end{aligned}
$$

\begin{lem}
    \label{lem:lem32} 
    There exists a universal number $R_0>0$. For any $R>R_0$ and any biharmonic function $u$ defined on $D_{R^4}\setminus D_R\subset \mathbb R^2$, if the coefficients $D_1$ and $\tilde{D}_1$ in the expansion of $u$ vanish, then
    \begin{equation}
	    \label{eqn:3circle}
	    \int_{D_{R^3}\setminus D_{R^2}} \abs{\nabla^2 u}^2 dx < \frac{1}{4} \left( \int_{D_{R^4}\setminus D_{R^3}} \abs{\nabla^2 u}^2 dx+ \int_{D_{R^2}\setminus D_R} \abs{\nabla^2 u}^2 dx \right).
    \end{equation}
\end{lem}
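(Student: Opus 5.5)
The plan is to decompose $u$ into its Fourier modes. Since the modes are orthogonal on circles, the Hessian energy on any annulus $D_{b}\setminus D_a$ splits as a sum over modes. Write $|\nabla^2 u|^2$ in polar coordinates: the components $u_{rr}$, $(u_\theta/r)_r$ and $u_{\theta\theta}/r^2+u_r/r$ are each of the form (radial function)$\times\cos(n\theta)$ or $\sin(n\theta)$ in each mode. Hence
\[
\int_{D_b\setminus D_a}|\nabla^2u|^2=\sum_n E_n(a,b),
\]
with no cross terms between different $n$, or between the $\cos$ and $\sin$ parts. So it suffices to prove \eqref{eqn:3circle} separately for each single mode $u_0$, $u_n^{c}\cos n\theta$, $u_n^s\sin n\theta$, with the constant $1/4$ uniform in $n$ once $R>R_0$. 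Summing then gives the lemma. Note that the affine part ($A_0$, $A_1$, $\tilde A_1$) has zero Hessian, and that with $D_1=\tilde D_1=0$ the $n=1$ mode is spanned by $r^{-1}$ and $r^3$ only.

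For a fixed mode, the Hessian components are linear combinations of at most four terms. Each term is a power $r^{\alpha}$ with a distinct exponent; for $n=0$ the terms are $r^{-2}$ (from $\ln r$), $1$, $\ln r$, and again $r^{-2}$. The key point is that each term's energy density over a dyadic-type annulus $[R^k,R^{k+1}]$ behaves like $\int r^{2\alpha+1}dr$. This is exponentially growing or decaying in the exponent, except in a critical case $2\alpha+2=0$. The critical case is exactly what $D_1$ would produce: $r\ln r\cos\theta$ has Hessian $\sim r^{-1}$, which would give energy $\sim\ln R$ on every annulus, and three-circle would fail. Similarly $C_0$ and $D_0$ give Hessian $\sim 1$ and $\ln r$, hence energy $\sim r^2$ (growing), while $B_0$ gives Hessian $\sim r^{-2}$, hence decaying energy. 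Thus for every remaining term, the energy on the three annuli $I_1=[R,R^2]$, $I_2=[R^2,R^3]$, $I_3=[R^3,R^4]$ is geometrically ordered with ratio at least $R^{c}$ for a fixed $c>0$ (in fact $c\ge 2$ for $n\ge1$ after checking exponents $n-2\pm\cdots$).

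To handle the interaction between growing and decaying pieces within one mode, I would split the radial Hessian profile as $f=f_+ + f_-$, with $f_+$ the growing terms and $f_-$ the decaying ones. Let $\|\cdot\|_j$ denote the $L^2(r\,dr)$ norm on $I_j$. Then $\|f\|_2\le \|f_+\|_2+\|f_-\|_2\le R^{-c}(\|f_+\|_3+\|f_-\|_1)$. I also need the reverse inequalities $\|f_+\|_3\lesssim \|f\|_3$ and $\|f_-\|_1\lesssim \|f\|_1$. These hold because on $I_3$ the decaying part is tiny relative to its value on $I_2$, which I control, and symmetrically on $I_1$. Concretely, $\|f_-\|_3\le R^{-c}\|f_-\|_2$, and one can bootstrap using $\|f_\pm\|$ in terms of the coefficients. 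If the pieces within $f_+$ or within $f_-$ are themselves non-orthogonal, I would instead compare directly with the coefficients: on $I_3$ the dominant power controls the coefficient norm up to a constant, since distinct powers are uniformly linearly independent on long intervals. Squaring then gives $E_n(I_2)\le C R^{-2c}(E_n(I_1)+E_n(I_3))<\frac14(\cdots)$ once $R$ is large.

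The main obstacle is uniformity in $n$ together with the near-cancellation between terms within one mode. For example, $r^n$ and $r^{n+2}$ (or the $\log$-terms at $n=0,1$) have Hessian profiles that are not orthogonal. One must ensure that the constants in ``$\|f_+\|_3\gtrsim$ coefficients'' do not degenerate as $n\to\infty$. I would handle this by computing the $2\times2$ Gram matrices explicitly. For large $n$ the exponents differ by $2$ while the intervals have logarithmic length $\ln R$, so the normalized Gram matrix stays uniformly nondegenerate once $R>R_0$. The low modes $n=0,1,2$ are checked by hand.
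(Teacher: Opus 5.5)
Your route is genuinely different from the paper's, and its skeleton is sound. The paper proves the pointwise inequality $Q(0)\le\frac14\bigl(Q(L)+Q(-L)\bigr)$ for the $\theta$-integrated Hessian density in $t=\log r$, using that each mode family is invariant under $t$-translation. It then integrates over $[2L,3L]$ and verifies positivity by completing squares. Your plan instead uses the mode splitting, $f=f_++f_-$, the triangle inequality on $I_2$, and the bootstrap $\|f_+\|_3+\|f_-\|_1\le(1-\eta)^{-1}(\|f\|_3+\|f\|_1)$.

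\textbf{The gap.} The step you single out as the main obstacle rests on a false claim: the normalized Gram matrix of the two growing profiles is \emph{not} uniformly nondegenerate in $n$. Let $\phi_a,\phi_c$ be the Hessian profiles of $r^n\cos n\theta$ and $r^{n+2}\cos n\theta$. Their densities in $t$ are $4\pi n^2(n-1)^2e^{2(n-1)t}$ and $4\pi(n+1)^2(n^2+2)e^{2(n+1)t}$, with cross density $4\pi n^2(n^2-1)e^{2nt}$; these are the coefficients of $Q_{u_n}$. Integrating over $[3L,4L]$ gives, for every $R$,
\[
\cos^2\angle(\phi_a,\phi_c)=\frac{n^2-1}{n^2+2}\cdot\frac{(1-e^{-2nL})^2}{(1-e^{-2(n-1)L})(1-e^{-2(n+1)L})}\ \ge\ \frac{n^2-1}{n^2+2}.
\]
The reason is that both profiles live in a $t$-window of width $O(1/n)$ at the outer end of $I_3$, so the length $\log R$ of the interval does not help. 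The same happens for the profiles of $r^{-n}\cos n\theta$ and $r^{2-n}\cos n\theta$ at the inner end of $I_1$. Hence the constant in $|a|\|\phi_a\|_3+|c|\|\phi_c\|_3\le K_n\|f_+\|_3$ is of order $n$. With a \emph{fixed} separation factor $R^{-c}$, your final estimate becomes $E_n(I_2)\le Cn^2R^{-2c}\bigl(E_n(I_1)+E_n(I_3)\bigr)$, which is not uniform in $n$.

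\textbf{The fix.} The missing observation is that the scale separation improves with $n$. By exact scaling,
\[
\|\phi_a\|_2=R^{-(n-1)}\|\phi_a\|_3,\quad \|\phi_c\|_2=R^{-(n+1)}\|\phi_c\|_3,\quad \|\phi_d\|_2=R^{-(n-1)}\|\phi_d\|_1,\quad \|\phi_b\|_2=R^{-(n+1)}\|\phi_b\|_1.
\]
Moreover, $\sin^2\angle\ge 2/(n^2+2)$ once $R$ is large, uniformly in $n$, so $K_n\le Cn$ and no worse. Therefore $\|f_+\|_2\le C\,nR^{-(n-1)}\|f_+\|_3\le 2CR^{-1}\|f_+\|_3$ for all $n\ge2$. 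The same bound holds for $f_-$ and for the bootstrap error $\eta\lesssim nR^{-2(n-1)}$.

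This polynomial-versus-exponential trade-off is exactly how the paper obtains uniformity: the bound $G_n(q)\ge 2-C(n^2+2)q^{n-1}$, with $(n^2+2)q^{n-1}$ decreasing in $n$, in Lemma~\ref{lem:Dpos}. With this change your argument closes and avoids the completing-the-square algebra. It also gives a factor of order $R^{-2}(\log R)^2$ in place of $\frac14$. As written, however, the uniformity step is justified by an incorrect claim.
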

Such a property for $u$ is usually known as the three-circle property in the literature. It is very useful in proving decay estimate. In this paper, we shall we use it to prove a decay estimate for the $L^2$ integral of $A$ of a Willmore surface that is a small perturbation of the plane. The proof of Lemma \ref{lem:lem32} is by routine computations and is presented in the appendix for the sake of completeness.

\section{The decay of the second fundamental form} 
\label{sec:3circle}
In this section, we establish a three-circle property for Willmore surfaces (Theorem \ref{thm:3circle}). As an application, we prove that if a properly embedded connected Willmore surface $\Sigma$ satisfies \eqref{eqn:main2} and a component of its restriction to a large ball is very close to a flat disk, then the entire surface is close to a plane (Proposition \ref{prop:app1}). This is useful in the next section in the study of the topology of $\Sigma$ in Theorem \ref{thm:main2}.

\subsection{Three-circle lemma}

We start by making the following assumptions depending on two parameters $\epsilon$ and $R$:
\begin{itemize}
    \item[(A1)] $\Sigma$ is properly embedded in $B_{R^4}\setminus B_R$;
    \item[(A2)] there is a plane $P$ passing through the origin such that $\Sigma \cap (B_{R^4}\setminus B_R)$ is $\epsilon$-close to $P$ (as in Definition \ref{def:close}).
\end{itemize}

For the statement of the main theorem of this part, we need the definition of residues $\tau_1$ and $\tau_2$. They played a key role in \cite{Li-Yin} and \cite{Li-Yin-Zhou}. For the history of their gradual discovery, we refer to the introduction of \cite{Li-Yin}. The definition presented below is the same as in \cite{Li-Yin} except that (1) it is simplified since we are in the codimension one case; (2) we use extrinsic notation instead of a conformal parametrization. 

%\red{I noticed that you used $H=\kappa_1+\kappa_2$. So did we in \cite{Li-Yin}. Let me check.}
\begin{defi}
    Assume that $\Sigma$ intersects $\partial B_r$ transversely for some $r$. For any $c\in \R^3$ and $S\in \mathfrak{so}(\R^3)$, define
    \begin{equation}
        \label{eqn:tau_c}
        \tau_1(\Sigma,c):= \int_{\Sigma\cap \partial B_r} \left( - 2\pfrac{H}{\nu} \n + 2 H \pfrac{\n}{\nu} + \abs{H}^2 \nu \right)\cdot c ds
    \end{equation}
    and
    \begin{equation}
        \label{eqn:tau_S}
        \tau_2(\Sigma,S):= \int_{\Sigma\cap \partial B_r} - 2 (Sx\cdot \n) \pfrac{H}{\nu} + 2 H \pfrac{(Sx\cdot \n)}{\nu} + \abs{H}^2 (Sx\cdot \nu) ds. 
    \end{equation}
    Here $\n$ is the unit normal to the surface $\Sigma$, $\nu$ is the unit normal of $\partial \Sigma$ inside $\Sigma$ and $ds$ is the arc-length element of $\partial\Sigma$.
\end{defi}

It is known that $\tau_1,\tau_2$ are independent of $r$ (see Theorem 2.1 of \cite{Li-Yin}). Moreover, if $\partial B_r\cap \Sigma$ happens to be the boundary of a compact Willmore surface, then $\tau_1$ and $\tau_2$ vanish.

The main result of this section is
\begin{thm}
\label{thm:3circle}
There exist $R_0>0$ and $\epsilon_1>0$ such that for a Willmore surface $\Sigma$ satisfying (A1-A2) with parameter $\epsilon<\epsilon_1$ and $R= R_0+1$ and  
\begin{equation}
    \label{eqn:residue}
\tau_1(\Sigma,c)=0; \qquad \tau_2(\Sigma,S)=0
\end{equation}
for any $c\in \R^3$ and $S\in \mathfrak{so}(\R^3)$, we have
$$
	W_2\leq \frac{1}{4}(W_1+W_3)
$$
where 
\[
W_i:= \int_{\Sigma\cap (B_{R^{i+1}}\setminus B_{R^i})} \abs{A}^2 dV_\Sigma
\]
for $i=1,2,3$.
\end{thm}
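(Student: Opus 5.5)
We argue by contradiction combined with a blow-up (compactness) to the linearized problem, the standard way to pass from a three-circle lemma for a linear equation to its nonlinear perturbation. Fix $R_0$ from Lemma \ref{lem:lem32} and $R=R_0+1$. Suppose there is a sequence of Willmore surfaces $\Sigma_k$ satisfying (A1-A2) with $\epsilon_k\to 0$ and with vanishing residues (\ref{eqn:residue}), but with $W_2^{(k)}>\frac14(W_1^{(k)}+W_3^{(k)})$. By (A2) each $\Sigma_k\cap(B_{R^4}\setminus B_R)$ is the graph of $u_k$ over a planar domain of $P_k$ with $\|u_k\|_{C^2}\le\epsilon_k$. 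Rotate so $P_k=\R^2\times\{0\}$. Since $\Sigma_k$ is Willmore with small $|A|$, $\epsilon$-regularity gives uniform bounds for all derivatives of $u_k$ on slightly smaller annuli. The domain is also close to the full annulus in the plane; by shrinking it slightly (e.g.\ working on $D_{R^4-\eta}\setminus D_{R+\eta}$ and absorbing this in the choice of radii, or more cleanly by running the argument with Lemma \ref{lem:lem32} applied to slightly perturbed radii) we may treat $u_k$ as defined on a fixed planar annulus.

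Normalize $v_k:=u_k/\lambda_k$, where $\lambda_k:=\big(\sum_i W_i^{(k)}\big)^{1/2}$, which is comparable to $\|\nabla^2 u_k\|_{L^2}$ since $|A|^2=|\nabla^2u|^2(1+O(\epsilon_k))$. Subtract the affine part of $u_k$, which does not affect $\nabla^2$, so that the $v_k$ are bounded in $W^{2,2}$. The Willmore equation in graph form reads $\Delta^2u=Q(u)$, with $Q$ at least quadratic in $(\nabla u,\nabla^2u,\nabla^3u,\nabla^4u)$ with a small factor. Hence $\Delta^2v_k=O(\epsilon_k)\,(\text{higher derivatives of }v_k)$. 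Interior elliptic estimates then give $C^{4,\alpha}$ bounds on compact subannuli, while $W^{2,2}$ bounds hold up to the boundary. A subsequence converges to a biharmonic $v$, and $\nabla^2v_k\to\nabla^2v$ strongly in $L^2$ on each of the three annuli. Strong convergence up to the outer and inner circles is delicate; I would first try to get it from uniform higher-order bounds on a slightly larger annulus, available because $\Sigma_k$ is defined on $B_{R^4}\setminus B_R$ and the three-circle radii can be shrunk by a fixed factor. In the limit $\sum_i\int|\nabla^2v|^2=1$, and the reversed inequality $\int_{2}\ge\frac14(\int_1+\int_3)$ persists.

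The key point is to show that the coefficients $D_1,\tilde D_1$ of $v$ vanish, so that Lemma \ref{lem:lem32} yields a contradiction. (We need $v$ non-trivial in $\nabla^2$, which holds by normalization, and the strict inequality in (\ref{eqn:3circle}) contradicts the reversed limit inequality.) This is where (\ref{eqn:residue}) enters. Expanding $\tau_1(\Sigma_k,c)$ with $c=e_3$ (normal to $P$) gives, to leading order, $-2\int_{\partial D_r}\partial_r\Delta u_k\,ds+O(\lambda_k^2)$. The in-plane choices $c=e_1,e_2$ give quadratic terms, and $\tau_2$ with $S$ rotating $e_3$ into $e_j$ gives, to leading order, $\int_{\partial D_r}\big(-2x_j\,\partial_r\Delta u_k+2\Delta u_k\,\partial_r x_j\big)ds$. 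Dividing by $\lambda_k$ and letting $k\to\infty$ gives $\int_{\partial D_r}(x_j\partial_r\Delta v-\Delta v\,\partial_r x_j)\,ds=0$ for $j=1,2$, and $\int_{\partial D_r}\partial_r\Delta v=0$.

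Now compute these on the expansion (\ref{eqn:biexp}). For $n=1$, $\Delta(r\ln r\cos\theta)=\frac{2\cos\theta}{r}$, while the other mode-one terms give $\Delta(r^3\cos\theta)=8r\cos\theta$ and harmonic terms. The flux $\int(x_1\partial_r\Delta v-\Delta v\,\partial_r x_1)$ is the Green pairing of $\Delta v$ with the harmonic function $x_1$, which kills the harmonic $r\cos\theta$ part and is $-4\pi D_1$ (up to a nonzero constant) from the $r^{-1}\cos\theta$ part. Hence $D_1=\tilde D_1=0$. The $\tau_1$ condition similarly kills $D_0$, which is not needed. I expect the main obstacle to be the careful bookkeeping showing the error terms in the residues are $o(\lambda_k)$ and not merely $O(\epsilon_k)$. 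These errors are at least quadratic in $u_k$ with bounded derivatives, so they are $O(\epsilon_k\|u_k\|_{C^3})$. Therefore one needs $\|u_k\|_{C^3}\lesssim\lambda_k$ on the circle, which follows from $\epsilon$-regularity estimates bounding higher derivatives by the local $L^2$ norm of $A$.
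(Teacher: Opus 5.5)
Your proposal follows the paper's proof essentially step for step. The paper argues by contradiction along a sequence whose closeness parameter tends to zero, normalizes $u_k$ minus its affine part by the energy, uses $\epsilon$-regularity to pass to a biharmonic limit, and uses vanishing $\tau_2$ for the two rotations $S$ mixing $e_3$ with $e_1,e_2$ to force $D_1=\tilde D_1=0$, contradicting Lemma \ref{lem:lem32}.

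One point needs correcting. You do not need strong convergence up to $\partial D_R$ and $\partial D_{R^4}$, and your proposed fix of shrinking the radii would not work, since the three annuli already tile $D_{R^4}\setminus D_R$. Instead, the contradiction hypothesis gives $W_2\ge\frac15\sum_i W_i$, so nontriviality of the limit is seen on the interior middle annulus, where local convergence is strong. For $W_1$ and $W_3$, lower semicontinuity already yields the reversed limit inequality. This is exactly why the paper normalizes by $W_2^{1/2}$.
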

\begin{proof}
Let $R_0$ be given in Lemma \ref{lem:lem32} and $R= R_0+1$. Assume that the desired $\epsilon_1$ does not exist, then we may find a sequence of Willmore surfaces $\Sigma_k$ satisfying

\begin{itemize}
    \item[(B1)] $\Sigma_k$ is properly embedded in $B_{R^4}\setminus B_R$;
    \item[(B2)] there is a plane $P_k$ passing through the origin such that $\Sigma_k \cap (B_{R^4}\setminus B_R)$ is $1/k$-close to $P_k$ (as in Definition \ref{def:close});
    \item[(B3)] for any $c\in \R^3$ and $S\in \mathfrak{so}(\R^3)$, we have
    \[
        \tau_1(\Sigma_k,c)=0, \qquad \tau_2(\Sigma_k,S)=0;
    \]
    \item[(B4)] if $W_{k;i}= \int_{\Sigma_k\cap (B_{R^{i+1}}\setminus B_{R^i})} \abs{A_k}^2 dV_{\Sigma_k}$, then
	    \begin{equation}
		    \label{eqn:otherwise}
        W_{k;2} > \frac{1}{4} (W_{k;1} + W_{k;3}).
	    \end{equation}
\end{itemize}

Denote the $\pi$ and $u$ in Definiton \ref{def:close} for $\Sigma_k$ by $\pi_k$ and $u_k$, we have
\begin{equation}
	\label{eqn:ukbound}
	\abs{\nabla u_k}\leq \frac{1}{k}<\frac{1}{10}
\end{equation}
for large $k$. Hence, there is a universal constant $C$ such that
\begin{equation}
	\label{eqn:compare}
	\frac{1}{C}\int_{\pi_k (\Sigma_k\cap (B_{R^{i+1}}\setminus B_{R^i}))} \abs{\nabla^2 u_k}^2 dx\leq W_i(\Sigma_k) \leq C \int_{\pi_k (\Sigma_k\cap (B_{R^{i+1}}\setminus B_{R^i}))} \abs{\nabla^2 u_k}^2 dx.
\end{equation}
Let $\epsilon_k= W_2(\Sigma_k)^{1/2}$ and set
$$
	\tilde{u}_k = \frac{u_k-c_k-a_k x -b_k y}{\epsilon_k}.
$$
where $c_k, a_k,b_k$ are chosen so that the average of $\nabla \tilde u_k$ and $\tilde u_k$ on $D_{2R}\setminus D_R$ is 0.

By Lemma 2.2 and Theorem 2.10 in \cite{Kuwert-Schatzer}, %(It is generally believed that an $L^\infty$ estimate automatically yields higher-order estimates. But in Kuwert’s paper, I found an $L^2$ -norm estimate for the gradient, which is enough.)
we have
\begin{equation}
	\label{eqn:ks}
\|\nabla A_{\Sigma_k}\|_{L^2(B_{R^4-\delta}\setminus B_{R+\delta})}+\| A_{\Sigma_k}\|_{L^\infty(B_{R^4-\delta}\setminus B_{R+\delta})} \leq C \varepsilon_k.
\end{equation}
Note that $(A_{\Sigma_k})_{ij}=\frac{(u_k)_{ij}}{\sqrt{1+|\nabla u_k|^2}}\n$. Since the $\nabla$ in \eqref{eqn:ks} is the covariant derivative of the normal bundle, we have
$$
(\nabla_l A_{\Sigma_k})_{ij}=\left(\frac{(u_k)_{ijl}}{\sqrt{1+|\nabla u_k|^2}}+O(|\nabla^2 u_k|^2)\right)\n.
$$
Due to \eqref{eqn:ukbound}, we obtain
$$
\|\nabla^3 u_k\|_{L^2(\pi_k(\Sigma_k \cap (B_{R^4-\delta}\setminus B_{R+\delta})))}<C\varepsilon_k,
$$
%If $\varepsilon_1$ is small, due to the control over $\nabla u_k$, the well-known $\varepsilon$-regularity implies
%$$
%	\norm{\nabla^l A_{\Sigma_k}}_{L^\infty(\Sigma_k \cap (B_{R^4-\delta}\setminus B_{R+\delta}))} \leq C(l) \epsilon_k.
%$$
%This control over the derivatives of $A_{\Sigma_k}$ passes on to $u_k$. 
%$$
%	\norm{\nabla^l u_k}_{L^\infty(\pi_k(\Sigma_k \cap (B_{R^4-\delta}\setminus B_{R+\delta})))} \leq C(l) \epsilon_k,
%$$
which implies that
$$
	\norm{\nabla^3 \tilde u_k}_{L^2(\pi_k(\Sigma_k \cap (B_{R^4-\delta}\setminus B_{R+\delta})))} \leq C.
$$
Therefore, we may assume that the sequence $\tilde{u}_k$ converges locally strongly (in $W^{3,2}$) in $D_{R^4}\setminus D_R\subset \mathbb R^2$ to a biharmonic function $\tilde{u}_\infty$. By \eqref{eqn:compare} and the definition of $\epsilon_k$,
$$
	\int_{D_{R^3}\setminus D_{R^2}} \abs{\nabla^2 \tilde{u}_k}^2 dx \geq c>0,
$$
hence $\tilde{u}_\infty$ is nontrivial.
%\cmt{Here and elsewhere in the paper, we need to choose a better notation for balls on $R^2$ or some plane.}

Due to the explicit formula for $A_{\Sigma_k}$ in terms $u_k$
$$
	|A|^2=\frac{\left[\left(1+u_y^2\right) u_{x x}-2 u_x u_y u_{x y}+\left(1+u_x^2\right) u_{y y}\right]^2}{\left(1+|\nabla u|^2\right)^3}-2 \frac{u_{x x} u_{y y}-u_{x y}^2}{\left(1+|\nabla u|^2\right)^2} .
$$
we obtain
$$
	\lim_{k\to \infty} \frac{1}{\epsilon_k^2} \abs{A_k}^2 = \abs{\nabla^2 \tilde{u}_\infty}^2.
$$
Therefore, \eqref{eqn:otherwise} implies
$$
\int_{D_{R^3}\setminus D_{R^2}} \abs{\nabla^2 \tilde{u}_\infty}^2 dx \geq \frac{1}{4} \left(  \int_{D_{R^2}\setminus D_R} + \int_{D_{R^4}\setminus D_{R^3}}\right)	\abs{\nabla^2 \tilde{u}_\infty}^2 dx.
$$
To derive a contradiction from Lemma \ref{lem:lem32} and complete the proof of Theorem \ref{thm:3circle}, it suffices to show that in the expansion of $\tilde{u}_\infty$, the coefficients $D_1$ and $\tilde{D}_1$ are both zero.
For that purpose, we recall
    \begin{equation*}
        \tau_2(\Sigma,S):= \int_{\Sigma\cap \partial B_r} - 2 (Sx\cdot \n) \pfrac{H}{\nu} + 2 H \pfrac{(Sx\cdot \n)}{\nu} + \abs{H}^2 (Sx\cdot \nu) ds. 
    \end{equation*}
%    There is explicit formula for $H_k$ in terms of $u_k$:
%    $$
%    	H=\frac{\left(1+u_y^2\right) u_{x x}-2 u_x u_y u_{x y}+\left(1+u_x^2\right) u_{y y}}{2\left(1+|\nabla u|^2\right)^{3 / 2}}, \quad K=\frac{u_{x x} u_{y y}-u_{x y}^2}{\left(1+|\nabla u|^2\right)^2} .
%    $$
For fixed $r\in (R,R^2)$, the assumption that $\tau_2(\Sigma_k,S)=0$ and the equations \eqref{eqn:H} and \eqref{eqn:K} imply that if we take the limit $k\to \infty$, and set $\nu= (\cos \theta, \sin\theta, 0)$ and $\n = (0,0,1)$, we get
\[
\int_0^{2\pi} -(S( r\cos\theta, r\sin\theta,0) \cdot (0,0,1)) \partial_r (\triangle \tilde{u}_\infty) + \triangle \tilde{u}_\infty  (S(\cos\theta, \sin\theta,0) \cdot (0,0,1)) d\theta =0.
\]

In particular, by taking
\[
S=
\left(
\begin{array}{ccc}
   0  & 0 & 1 \\
   0  & 0 & 0 \\
   -1  & 0 & 0 \\
\end{array}
\right)
\quad \text{or} \quad
\left(
\begin{array}{ccc}
   0  & 0 & 0 \\
   0  & 0 & 1 \\
   0  & -1 & 0 \\
\end{array}
\right),
\]
we find that
\begin{equation}
    \label{eqn:nobad}
0= \int_0^{2\pi} \cos\theta (-r\partial_r (\Delta \tilde u_\infty)+ \Delta \tilde u_\infty) d\theta = \int_0^{2\pi} \sin \theta (-r\partial_r (\Delta \tilde u_\infty)+ \Delta \tilde u_\infty) d\theta.
\end{equation}
By \eqref{eqn:biexp}, this is equivalent to $D_1=\tilde{D}_1=0$.
%With \eqref{eqn:nobad}, Lemma \ref{lem:lem32} implies that
%\begin{equation}
%    \label{eqn:hf}
%\int_{D_{R^3}\setminus D_{R^2}} \abs{\phi_\infty}^2 dx \geq e^{-q(L_0+1)} \left( \int_{D_{R^4}\setminus D_{R^3}} \abs{\phi_\infty}^2 dx +\int_{D_{R^2}\setminus D_R} \abs{\phi_\infty}^2 dx
%\right).
%\end{equation}
%In fact, by taking $\phi_\infty$ as a harmonic function in cylindrical coordinates $(t,\theta)$ where $t=\log r$, there is a well-known expansion of $\phi_\infty$ of the form:
%\[
%\phi_\infty(t,\theta) = a+ bt + \sum_{k=1}^\infty \left( (a_k e^{-kt} + b_k e^{kt} ) \cos \theta + (a'_k e^{-kt} + b'_k e^{kt})\sin \theta \right).
%\]
%The property \eqref{eqn:nobad} is equivalent to
%\[
%a_1=a_1'=0
%\]
%in the above expansion. Hence, Lemma \ref{lem:lem32} implies \eqref{eqn:hf}. But, this is a contradiction to (B4) since $\phi_k$ converges locally in $C^1$ norm to $\phi_\infty$ on $B_{R^4}\setminus B_R$. This completes the proof of Theorem \ref{thm:3circle}.
\end{proof}

% Note: $\epsilon_1$ is the 3-circle threshold; it is used in $\epsilon$-close to a plane.

\subsection{An application}

\begin{pro}
\label{prop:app1}
For any $\delta>0$, there exists some $\epsilon_2>0$ such that the following holds.
    Let $\Sigma$ be a properly embedded connected Willmore surface satisfying
    \begin{equation}
        \label{eqn:app1}
    \sup_{\R^3 \setminus B_1} \abs{x} \abs{A}\leq \epsilon_2.
    \end{equation}
   If for some plane $P$ passing through the origin, a component (denoted by $\Sigma_0$) of $\Sigma\cap B_2$ is $\epsilon_2$-close to $P$, then there exists a plane $P'$ such that $\Sigma$ is $\delta$-close to $P'$.
\end{pro}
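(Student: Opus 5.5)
The plan is a continuation argument: extend the almost-flat graph structure of $\Sigma_0$ outward, one $R$-adic annulus at a time, using the three-circle property (Theorem \ref{thm:3circle}) to control the accumulated tilt. Fix $R=R_0+1$ as in Theorem \ref{thm:3circle}. By \eqref{eqn:app1} and scaling, on every annulus $B_{2\rho}\setminus B_{\rho/2}$ with $\rho\geq 2$, the rescaled surface $\rho^{-1}\Sigma$ has $\abs{A}\leq 4\epsilon_2$. Lemma \ref{lem:graph} then shows that every point of $\Sigma$ outside $B_2$ has a neighborhood that is $\delta(\epsilon_2)$-close to a flat disk of radius comparable to $\abs{x}$.

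Starting from $\Sigma_0$, I would cover $B_{2R}\setminus B_2$ by a bounded number of such disks and apply Proposition \ref{pro:combine} finitely many times. This shows that the component $\Sigma^{(1)}$ of $\Sigma\cap B_{2R}$ containing $\Sigma_0$ is a graph over a plane $P_1$ with small $C^2$ norm after rescaling. Inductively, the component $\Sigma^{(j)}$ of $\Sigma\cap B_{R^j}$ containing $\Sigma_0$ is a graph over a disk in some plane $P_j$, and its rescaled piece on $B_{R^{j}}\setminus B_{R^{j-4}}$ satisfies (A1)--(A2) after scaling by $R^{-(j-5)}$. The tilts $\angle(P_j,P_{j+1})$ must be kept uniformly small; otherwise the argument collapses after finitely many steps, since Proposition \ref{pro:combine} may only be used boundedly often with uniform constants.

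The residue condition \eqref{eqn:residue} comes for free. Each $\Sigma^{(j)}$ is a compact Willmore graph over a disk whose boundary is $\Sigma^{(j)}\cap\partial B_r$, so $\tau_1=\tau_2=0$ on every such circle. Both residues, and the quantities $W_i$, are scale invariant. Theorem \ref{thm:3circle} therefore applies to every quadruple of consecutive annuli, giving
\[
W_{j+1}\leq \tfrac14 (W_j+W_{j+2}),\qquad W_j:=\int_{\Sigma\cap(B_{R^{j+1}}\setminus B_{R^j})}\abs{A}^2 .
\]
We also have the a priori bound $W_j\leq C\epsilon_2^2\log R$ from \eqref{eqn:app1}. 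The standard three-circle dichotomy says a sequence satisfying this inequality is either eventually geometrically increasing or geometrically decaying. Boundedness excludes growth, so $W_j\leq C\epsilon_2^2 2^{-j}$ with $W_0\lesssim \epsilon_2^2$ controlled by the closeness of $\Sigma_0$.

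By $\epsilon$-regularity, as in \eqref{eqn:ks}, the tilt between consecutive planes is bounded by $C\sqrt{W_j}$, and $\sum_j\sqrt{W_j}\leq C\epsilon_2$. So all $P_j$ stay within angle $C\epsilon_2$ of $P$, which closes the induction, and they converge to a limit plane $P'$. The union $\bigcup_j\Sigma^{(j)}$ is then an entire graph over $P'$ with small gradient and small scaled Hessian. It is open and closed in $\Sigma$, since its boundary at any finite radius is interior to a later $\Sigma^{(j)}$. Since $\Sigma$ is connected, it equals $\Sigma$.

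The main obstacle is making this induction self-consistent. The three-circle inequality is needed to control the tilt, but it can only be applied once we already know the surface is $\epsilon_1$-close to a single plane on four consecutive annuli. I would handle this with a bootstrap: assume the tilts are at most $C_*\epsilon_2$ up to step $j$, use Theorem \ref{thm:3circle} to get $W_{j'}\leq C\epsilon_2^2 2^{-j'}$ for $j'\leq j$, and recover the tilt bound $C\epsilon_2\leq C_*\epsilon_2/2$ at step $j+1$. The resulting control on $\abs{\nabla u}$ and $\abs{x}\abs{\nabla^2u}$ is what I expect to yield ``$\delta$-close to $P'$'' in the sense used later. If the literal $C^2$ bound on the height $u$ is required, I would additionally subtract the best affine function in the graph description of Definition \ref{def:close}.
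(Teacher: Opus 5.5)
Your route is the paper's: grow the component $\Sigma_0(r)$ outward one factor of $R$ at a time, note that $\tau_1,\tau_2$ vanish because $\Sigma_0(r)$ is a topological disk, apply Theorem~\ref{thm:3circle} on every window of consecutive annuli, convert $L^2$ decay into tilt control by $\epsilon$-regularity, and conclude by connectedness. The step that fails as written is the decay estimate inside your bootstrap.

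At stage $j$ you only know the three-circle inequality $W_m\le\frac14(W_{m-1}+W_{m+1})$ on a \emph{finite} range of $m$ ending near $j$. On a finite range, boundedness does not exclude the growing mode. Take $\mu=2-\sqrt{3}$: the sequence $W_m=M\mu^{j-m}$ satisfies the inequality with equality, is bounded by $M$, and has $W_j=M$. So ``boundedness excludes growth'' is unjustified, and so is the bound $W_{j'}\le C\epsilon_2^2 2^{-j'}$ for all $j'\le j$. A priori, the outermost annuli of the current piece can carry energy of order $\epsilon_2^2$. The dichotomy argument needs the inequality on an infinite range, so it presupposes the conclusion.

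The repair is what the paper does. Use the a priori bound $W\le C\epsilon_2^2$ at \emph{both} ends of the range. The discrete maximum principle then gives the two-sided estimate $W_{j'}\le C\epsilon_2^2(\mu^{j'}+\mu^{j-j'})$, which is the paper's $e^{-\alpha\min(l,k-l)}$ bound. Since $\sum_{j'\le j}\sqrt{W_{j'}}\le C\epsilon_2$ still holds uniformly in $j$, your tilt bound closes the induction. (The paper measures the tilt from the tangent plane at radius $R^{k/2}$.) One-sided decay and convergence of the $P_j$ come only a posteriori, once the inequality holds for all $j$.

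Two smaller points. First, $W_j$ should be the integral over the component $\Sigma_0(R^{j+1})$, not over all of $\Sigma$. Theorem~\ref{thm:3circle} and your area bound apply only to that sheet, and other sheets may a priori be present. Second, your caveat about the $C^0$ part of Definition~\ref{def:close} is fair, and it applies equally to the paper's proof. However, subtracting an affine function does not fix it. The decay you obtain is $\abs{x}\abs{A}\lesssim\epsilon_2\abs{x}^{-\alpha'}$, with $\alpha'$ small because $R$ is large. This bounds the height only by $C\epsilon_2\abs{x}^{1-\alpha'}$, so both arguments really deliver scale-invariant closeness: small $\abs{\nabla u}$ and small $\abs{x}\abs{\nabla^2 u}$.
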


\begin{proof}
   For any $r>2$, consider $\Sigma\cap B_r$. We do not know if it is connected for now. We then denote the component containing $\Sigma_0$ by $\Sigma_0(r)$. 

   Let $R=R_0+1$ for $R_0$ given in Theorem \ref{thm:3circle}. By Lemma \ref{lem:graph} and Proposition \ref{pro:combine}, if $\epsilon_2$ is small, the assumption that $\Sigma_0(2)$ is $\epsilon_2$-close to $P$ implies that $\Sigma_0(R^5)$ is $\delta$-close to some plane $P'$. In fact, by \eqref{eqn:app1} and Lemma \ref{lem:graph}, for each $y\in \Sigma\cap (B_{R^5}\setminus B_2)$, $\Sigma$ is $\delta(\epsilon_2)$-close to a flat disk of radius $\abs{y}/4$ centered at $y$ (see Definition \ref{def:closedball}). Here $\delta(\epsilon_2)$ is a function of $\epsilon_2$ satisfying $\lim_{\epsilon_2\to 0} \delta(\epsilon_2)=0$. We can choose finitely many such pieces and use Proposition \ref{pro:combine} to combine them together if $\epsilon_2$ is small (depending on $R_0$). 

   The number $5$ in $\Sigma_0(R^5)$ is somewhat arbitrary in the sense that the above argument works for any finite number $k$ as long as we allow $\epsilon_2$ to depend on $k$. Thanks to Theorem \ref{thm:3circle}, we shall prove the claim: 

   \textbf{Claim.} There exists a small constant $\epsilon_2>0$ that is independent of $k$ ($k\geq 5$) such that if $\Sigma_0(R^k)$ is $\delta$-close to some plane $P$, then $\Sigma_0(R^{k+1})$ is $\delta$-close to (maybe) another plane $P'$.

Assuming the claim and since $\delta$ is independent of $k$, we can do induction on $k$ and take the limit to see that $\Sigma_0(\infty)$ is $\delta$-close to some plane $P'$. Since $\Sigma$ is connected by assumption, we know 
\[
\Sigma = \Sigma_0(\infty),
\]
which completes the proof of Proposition \ref{prop:app1}.

It remains to prove the claim. The statement in the claim is not scaling invariant. We shall prove it by establishing some exponential decay for $A_{\Sigma_0}$ in the annulus $\Sigma_0(R^{k+1})\setminus \Sigma_0(R)$.

By the assumption of the claim, $\partial \Sigma_0(R^l)$ is $\delta$-close to a circle of radius $R^l$. By Lemma \ref{lem:graph}, for any $y\in \Sigma_0(B_{R^{k+1}}\setminus B_{R^2})$, $\Sigma$ is $\delta(\epsilon_2)$-close to a flat disk of radius $\abs{y}/4$ centered at $y$. Hence, we know $\Sigma_0\cap (B_{R^{l+1}}\setminus B_{R^{l-2}})$ is topologically an annulus. Note that being $\delta(\epsilon_2)$-close to a flat disk as defined in Definition \ref{def:closedball} is scaling invariant. If $\psi_l(y)=\frac{y}{R^{l-3}}$, by choosing $\epsilon_2$ small and applying Proposition \ref{pro:combine}, we find that for any $l=4,5,\dots, k$
$$
	\psi_l \left( \Sigma_0\cap (B_{R^{l+1}}\setminus B_{R^{l-2}}) \right)(:=\Sigma_0^{(l)})
$$
satisfies the assumption (A2) in Theorem \ref{thm:3circle}. Since $\Sigma_0(r)$ (for any $r\leq R^{k+1})$ is topologically a disk, the assumption on $\tau_1$ and $\tau_2$ holds automatically. Applying Theorem \ref{thm:3circle} to $\Sigma_0^{(l)}$ for each $l=4,5,\dots,k$, implies that
$$
	\int_{\Sigma_0\cap (B_{R^{l}}\setminus B_{R^{l-1}})} \abs{A_{\Sigma_0}}^2 dV_{\Sigma_0} \leq \frac{1}{4} \left(  \int_{\Sigma_0\cap (B_{R^{l+1}}\setminus B_{R^{l}})} \abs{A_{\Sigma_0}}^2 dV_{\Sigma_0}+ \int_{\Sigma_0\cap (B_{R^{l-1}}\setminus B_{R^{l-2}})} \abs{A_{\Sigma_0}}^2 dV_{\Sigma_0}\right).
$$
%We need to improve this $2\delta$ to $\delta$. For that purpose, we need Theorem \ref{thm:3circle}. Without loss of generality, we may assume that $2\delta$ is smaller than the $\epsilon_1$ in Theorem \ref{thm:3circle} so that (A2) holds for $\Sigma_0(R^{i+4})\setminus \Sigma_0(R^{i+1})$ ($i=1,2,\cdots ,k-4$). Note that by our assumption, $\Sigma_0(R^k)\cap \partial B_r$ bounds a topological disk for $r<R^k$, which implies the vanishing of $\tau_1$ and $\tau_2$ when we apply Theorem \ref{thm:3circle} to $\Sigma_0(R^{i+4}) \setminus \Sigma_0(R^{i+1})$ for $i=1,2,\cdots ,k-4$. Finally, Lemma \ref{lem:graph} and \eqref{eqn:app1} imply that
%$$
%\int_{\Sigma_0\cap (B_{R^i}\setminus B_{R^{i-1}})} \abs{H_{\Sigma_0}}^2 dV_{\Sigma_0} \leq C\epsilon_2.
%$$
The above three-circle property implies the exponential decay estimate
\begin{equation*}
\int_{\Sigma_0\cap (B_{R^l}\setminus B_{R^{l-1}})} \abs{A_{\Sigma_0}}^2 dV_{\Sigma_0} \leq C \epsilon_2 e^{- \alpha \min(l,k-l)}
\end{equation*}
for some $\alpha>0$. The $\epsilon$-regularity of Willmore surface then yields the pointwise estimate (for some $\alpha'>0$)
\[
\abs{x}\abs{A_{\Sigma_0}}(x) \leq C \epsilon_2 \left( \abs{x}^{-\alpha'} + (R^{k+1}/\abs{x})^{-\alpha'} \right), \qquad \forall x\in \Sigma_0(R^k)\setminus \Sigma_0(R).
\]
This pointwise decay for $A_{\Sigma_0}$ implies that if $\epsilon_2$ is small (depending on $\delta$), then $\Sigma_0(R^{k+1})$ is $\delta$-close to $P'$ where $P'$ is the tangent plane of $\Sigma_0(R^{k+1})$ at some $x\in \Sigma_0(R^k)\cap \partial B_{R^{k/2}}$. Hence the proof of the claim is done.
\end{proof}

% $\epsilon_2$ is used to say a big disk grows forever to be a plane. It is used to bound $\abs{x}\abs{A}$ and $\epsilon_2$-close to a disk of radius 2.

\section{Geometry and topology of ends}
\label{sec:proof}

The goal of this section is to prove Theorem  \ref{thm:main2}. 
A first step of the proof is to understand the basic topology and shape of the surface outside $B_1$ satisfying
\begin{equation}
    \label{eqn:goodinf}
\abs{x}\abs{A}(x)\leq \epsilon, \quad \forall x\in \Sigma \cap (\R^3\setminus B_1).
\end{equation}

\subsection{Estimate on the normal direction of the surface}
For any point far away from the center, our first result shows its tangent plane almost passes the origin under some extra assumptions.

\begin{pro}
\label{prop:trans}
For any $\delta>0$, there exist $\epsilon_3>0$ and $K>0$ such that the following holds. Let $\Sigma$ be a properly embedded Willmore surface in $\R^3 \setminus B_1$  satisfying \eqref{eqn:goodinf} with $\epsilon<\epsilon_3$.
    Assume that $\Sigma$ is connected and $\Sigma \cap \partial B_1\ne \emptyset$.
   Then for any $x\in \Sigma\cap (\R^3 \setminus B_{K})$, if $\n_x$ is the unit normal vector of $\Sigma$ at $x$, then
   \begin{equation}
       \label{eqn:trans}
       \abs{\langle \n_x, \frac{x}{\abs{x}}\rangle} \leq \delta.
   \end{equation}
\end{pro}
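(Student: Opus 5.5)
We argue by contradiction and rescale. Suppose the statement fails for some $\delta>0$. Then there are properly embedded connected Willmore surfaces $\Sigma_k\subset \R^3\setminus B_1$ with $\Sigma_k\cap\partial B_1\neq\emptyset$ satisfying \eqref{eqn:goodinf} with $\epsilon=1/k$. There are also points $x_k\in\Sigma_k$ with $\abs{x_k}=:\rho_k\to\infty$ and $\abs{\langle \n_{x_k}, x_k/\abs{x_k}\rangle}>\delta$.

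Rescale by $\rho_k$ and set $\tilde\Sigma_k=\rho_k^{-1}\Sigma_k$. This is a Willmore surface in $\R^3\setminus B_{1/\rho_k}$ with $\abs{x}\abs{A_{\tilde\Sigma_k}}\le 1/k$, since this quantity is scale invariant. It passes through $\tilde x_k=x_k/\rho_k\in\partial B_1$, and the normal there makes an angle bounded away from $\pi/2$ with the radial direction. By Lemma \ref{lem:graph}, on any compact subset of $\R^3\setminus\{0\}$ the surfaces $\tilde\Sigma_k$ are locally $\delta(1/k)$-close to flat disks of radius comparable to $\abs{x}$. Hence the connected component $\Gamma_k$ of $\tilde\Sigma_k\cap(\R^3\setminus B_{1/\rho_k})$ through $\tilde x_k$ converges smoothly on compact sets of $\R^3\setminus\{0\}$ to a smooth properly embedded surface with $A\equiv0$. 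That limit is a piece of a plane $L$, and it must extend all the way: a plane piece with vanishing curvature, properly embedded in $\R^3\setminus\{0\}$, is the whole plane $L$ minus possibly the origin. The angle condition persists in the limit, so $\langle \n_L,\tilde x_\infty\rangle\ne0$, and $L$ does not pass through the origin. Its distance to $0$ is $d\ge\delta>0$, since $\abs{\tilde x_\infty}=1$.

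Next we use connectedness and $\Sigma_k\cap\partial B_1\ne\emptyset$, which says that $\tilde\Sigma_k$ reaches $\partial B_{1/\rho_k}$, i.e.\ arbitrarily close to the origin. We follow a curve in $\Sigma_k$ from $x_k$ to a point on $\partial B_1$. After rescaling, this curve enters $B_{d/2}$, so it must leave the planar sheet $\Gamma_k\cap(B_2\setminus B_{d/2})$, which lies entirely outside $B_{d/2}$. The plan is to show this is impossible. Being a graph over a region of $L$ avoiding a neighborhood of $L$'s closest point to the origin is an open-and-closed property along the curve, as long as the curve stays in the annular region where we have uniform graph control. More precisely, use Lemma \ref{lem:graph} with radius $\abs{y}/4$ at each point $y$ of the curve, together with Proposition \ref{pro:combine} applied a bounded number of times at each dyadic scale. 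Starting from the plane $L$ far out, the component of $\tilde\Sigma_k$ containing $\tilde x_k$ stays $o(1)$-close to $L$ on every fixed annulus $B_{2^{j+1}}\setminus B_{2^j}$. Since $L$ misses $B_{d}$, the component cannot enter $B_{d/2}$. This gives a contradiction, because the connected $\tilde\Sigma_k$ contains the curve joining $\tilde x_k$ to $\partial B_{1/\rho_k}$.

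The main obstacle is this last step. The smooth convergence argument only controls a component within a fixed compact annulus, and we must rule out that the curve from $\tilde x_k$ wanders out to huge radii and then comes back in along a different sheet. We first try to handle this by applying the convergence argument at every scale $\abs{x}\ge d/2$ simultaneously. At any point $y$ on the component with $\abs{y}\ge d/2$, the tangent plane near $y$ is $o(1)$-close to planes at nearby points. Consequently the function $f(y)=\mathrm{dist}(0,T_y\Sigma)/\abs{y}$ varies by $o(1)$ across distances comparable to $\abs{y}$. We then compare $\abs{y}$ along the curve. The quantity $\frac{d}{dt}\abs{y}^2$ along a unit speed curve in the surface is bounded below by the tangential part of $y$, and a tangent plane at distance $\ge c\abs{y}$ from the origin forces the surface to stay outside $B_{c\abs{y}/2}$ locally. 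A continuity argument along the curve should then show that $f$ cannot drop from about $\delta$ to near $0$ without the curve passing through a region where $\abs{y}$ is bounded below. That region stays away from $B_{d/2}$ and hence from $\partial B_{1/\rho_k}$. If this direct continuity argument fails, the fallback is to invoke Proposition \ref{prop:app1} on the rescaled surface near scale $d$: it would show that the whole connected surface is close to a plane $P'$ with $\mathrm{dist}(0,P')\approx d$. That again contradicts reaching $\partial B_{1/\rho_k}$.
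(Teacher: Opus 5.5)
The step you flag as the main obstacle is a genuine gap, and your ``direct continuity argument'' does not close it. Along a unit-speed curve in $\Sigma$, $\frac{d}{dt}\abs{y}^2=2\langle y,\dot y\rangle$ has no sign. It is only bounded in absolute value by $2\abs{y^T}$, so it gives no lower bound on $\abs{y}$. The quantity $\mathrm{dist}(0,T_y\Sigma)/\abs{y}$ does become nearly $0$ along $L$ itself, where it equals $d/\abs{y}$. Once the tangent planes are nearly radial, nothing in your argument stops a sheet from running back inward. Closeness to $L$ obtained scale by scale cannot rule this out. With $\abs{x}\abs{A}\le 1/k$ the tangent plane can rotate by an $O(1)$ angle over $\sim k$ dyadic scales. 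So your $o(1)$-control holds on each fixed annulus but not out to radii $2^{j_k}$ with $j_k\to\infty$, which is exactly where a path from $\tilde x_k$ to $\partial B_{1/\rho_k}$ may turn around. (A smaller point: follow the sheet through $\tilde x_k$ rather than all of $\tilde\Sigma_k$, whose limit can be a lamination with many leaves.)

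What actually closes the argument is your fallback, and it is precisely the paper's proof. The steps are:
\begin{enumerate}
\item The tangent plane at the bad point lies at distance $\gtrsim\delta$ from $0$ in the normalized scale.
\item Lemma \ref{lem:graph} and Proposition \ref{pro:combine}, applied a bounded number of times, give a graphical piece around the foot point $x_0$ of that plane.
\item After moving the origin to $x_0$, Proposition \ref{prop:app1} (i.e.\ the three-circle decay of Theorem \ref{thm:3circle}) shows the whole connected surface is close to one plane far from $0$.
\item This contradicts $\Sigma\cap\partial B_1\neq\emptyset$.
\end{enumerate}
Make this the main argument. One check is needed when you do. The hypothesis of Proposition \ref{prop:app1} relative to $x_0$ is only available along the component being grown, which stays near a plane at distance $\approx d$ from $0$, so there $\abs{y-x_0}\le 3\abs{y}$. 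Near the original inner boundary, $\abs{y-x_0}\abs{A}$ is only bounded by roughly $d\rho_k/k$, which need not be small. The argument of Proposition \ref{prop:app1} only needs the bound along the grown component, so this is harmless.

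Alternatively, your direct route can be completed with no decay estimate at all, using the second-order version of what you were reaching for. For $f=\abs{y}^2$ on $\Sigma$ one has $\nabla^2_\Sigma f(v,v)=2\abs{v}^2+2\langle y,\n\rangle A(v,v)\ge 2(1-\epsilon)\abs{v}^2$, so every critical point of $f|_\Sigma$ is a strict local minimum. The sheet near the foot of $L$ produces an interior local minimum $p_k$ of $f$ with $\abs{p_k}\to d$. As $c$ increases, the component of $\set{f\le c}$ containing $p_k$ only grows by collars: at regular points the sublevel set is locally a half-disk, and critical points are isolated in their sublevel sets. So this component stays compact and inside $\set{f\ge f(p_k)}$. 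The union of these components is open and closed in the connected $\tilde\Sigma_k$, hence equals it. That contradicts $\tilde\Sigma_k$ reaching $\partial B_{1/\rho_k}$, where $f<f(p_k)$. This route uses only $\abs{x}\abs{A}<1$ and local graphical control, whereas the paper's route needs the Willmore structure through Theorem \ref{thm:3circle}.
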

\begin{proof}
    For given $\delta\in (0,1)$, let $K$ be the unique positive solution to 
    $$
    \delta= \sqrt{1-\frac{4}{K^2}}.
    $$
    Without loss of generality, we assume that $\delta$ is small and $K\geq 5$. By elementary geometry, the fomula implies that for $\abs{x}\geq K$, if $P$ is a plane passing $x$ and the normal vector $\n_P$ satisfies
$$
	\abs{\langle \n_P, \frac{x}{\abs{x}} \rangle}>\delta,
$$
then the distance from the origin to $P$ is larger than $2$.

    For $x\in \Sigma\cap (\R^3 \setminus B_{K})$, let $\lambda=\frac{\abs{x}}{K}$ and consider the scaling $\psi(x)=\frac{x}{\lambda}$ in $\mathbb R^3$. Since $\Sigma$ is connected, we observe that $\psi(\Sigma)$ also satisfies the same assumptions and if the proposition holds for $\psi(\Sigma)$, it holds for $\Sigma$. Hence, we may assume without loss of generality that $\abs{x}=K$ in the rest of the proof.

    Prove by contradiction. If \eqref{eqn:trans} is not true for $x\in \Sigma\cap \partial B_K$, we shall get a contradiction by requiring $\epsilon_3$ small. Let $P$ be the tangent plane of $\Sigma$ at $x$. Since $\abs{\langle \n_x, \frac{x}{\abs{x}}\rangle} >\delta$, by the definiton of $K$,  the minimal distance between the origin and $P$ is larger than $2$.

    We proceed to find a piece of $\Sigma$ that is $\delta$-close to $P$. In the following induction which ends in finitely many steps (depending on $K$), we will keep asking $\epsilon_3$ to be smaller. We start with the intersection of $\Sigma$ with $B_{\frac{1}{5}\abs{x}}(x)$. Let $\Sigma_0(x)$ be the connected component of $\Sigma\cap B_{\frac{1}{5}\abs{x}}(x)$ containing $x$. Since $B_{\frac{1}{5}\abs{x}}(x)$ is disjoint with $B_1$, by \eqref{eqn:goodinf}, Lemma \ref{lem:graph} and Corollary \ref{cor:twodef}, $\Sigma_0(x)$ is $\delta(\epsilon_3)$-close to $P$. Next, we pick $y\in \Sigma_0(x)$ such that (1) $\abs{x-y}> \frac{1}{10}\abs{x}$; (2) the intersection of $B_{\frac{1}{5}\abs{x}}(x)$ and $B_{\frac{1}{5}\abs{y}}(y)$ contains a ball of radius $\frac{1}{20}\abs{x}$. Similarly, the component of $\Sigma\cap B_{\frac{1}{5}\abs{y}}(y)$ containing $y$ is also $\delta(\epsilon_3)$-close to $P$. By Proposition \ref{pro:combine}, we get a larger piece of $\Sigma$, denoted still by $\Sigma_0(x)$, which is $\delta(\epsilon_3)$-close to $P$. We repeat the above process, until $\Sigma_0(x)$ is properly embedded in $B_K$. The induction stops in finite steps because the radius of the balls is bounded from below by $1/5$ of the distance between the center and the origin. At the end of the induction, $\Sigma_0(x)\cap B_K$ is  close to the intersection of a plane $P$ with $B_K$ and the distance between $P$ and the origin is no less than $2-\delta$.

%   Let $x_0$ be the projection of the origin onto the plane $P$. Consider the ball $B$ (in $P$) centered at $x_0$ with radius $\abs{x-x_0}$. There exist finitely many points $\set{y_i}$ in $B$ and radius $r_i$ such that 
%   \begin{itemize}
%       \item $B_{r_i}(y_i)\cap B_1 =\emptyset$;
%       \item $B$ is covered by $\set{B_{r_i}(y_i)}$;
%       \item the total number of $\set{y_i}$ is bounded by a universal number.
%   \end{itemize}
%   It is important that the total number of balls is independent of $\abs{x}$. The idea is that we can use larger balls for $y_i$ that is further away from the origin.  For each $B_{r_i}(y_i)$, by choosing $\epsilon_3$ small, the intersection of $\Sigma$ with $B_{r_i}(y_i)$ is $\delta$-close to the plane $P$. Since the total number is bounded, we may conclude that there is a piece of $\Sigma$ in $B_{\abs{x-x_0}}(x_0)$ that is $\delta$-close to $P$. Obviously, we have
    If we shift the origin to the projection of the origin on $P$, denoted by $x_0$, the plane $P$ passes the new origin and \eqref{eqn:goodinf} implies that
   \[
   \abs{x-x_0} \abs{A}(x)\leq 2\epsilon_3, \quad \forall x \in \R^3 \setminus B_{K/2}(x_0).
   \]
   Now, Proposition \ref{prop:app1} implies that as long as $\epsilon_3$ is sufficiently small, $\Sigma$ is $\delta$-close to some plane $P'$. Since $P$ and $P'$ are $2\delta$-close inside $B_K$ and the minimal distance from the origin to $P$ is larger than $3/2$, this is a contradiction that $\Sigma$ has nontrivial intersection with $\partial B_1$.
\end{proof}

\subsection{The intersection between $\Sigma$ and a large sphere}

With Proposition \ref{prop:trans}, for $R_1>K+1$ (with $K$ given by Proposition \ref{prop:trans}), $\Sigma$ intersects with $\partial B_{R_1}$ transversely. The intersection is a finite union of closed curves on $\partial B_{R_1}$. Let $\gamma$ be one of the components of the intersection. The goal of this part is to study the geometry of $\gamma$.

\begin{lem}
	\label{lem:42}
For any $\delta'>0$, there exist $\epsilon_4>0$ and $K>0$ such that if $\Sigma$ is a properly embedded Willmore surface in $\R^3 \setminus B_1$ satisfying \eqref{eqn:goodinf} with $\epsilon<\epsilon_4$ and $\gamma$ is \textbf{one component} of the intersection of $\Sigma$ and $\partial B_{R_1}$ (with $R_1>K+1$), then the following holds. For any two points $x_1,x_2$ on $\gamma$ with intrinsic distance $d_\gamma(x_1,x_2)\leq 2\pi R_1$, the distance between $x_2$ and the great circle on $\partial B_{R_1}$ determined by the tangent vector of $\gamma$ at $x_1$ is smaller than $\delta' R_1$.
\end{lem}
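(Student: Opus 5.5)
We argue by scaling and local graph control. Since \eqref{eqn:goodinf} is scale invariant outside $B_1$, we rescale by $R_1^{-1}$. The sphere becomes $\partial B_1$, the surface $\Sigma/R_1$ satisfies $|x||A|\le \epsilon$ outside $B_{1/R_1}$, and the arc length bound becomes $d_\gamma(x_1,x_2)\le 2\pi$. We must show that $x_2$ lies within $\delta'$ of the great circle $C_{x_1}$ through $x_1$ tangent to $\gamma$. The idea is that $\gamma$ is a curve on the unit sphere whose geodesic curvature is small, of order $\epsilon$ plus the angle error from Proposition \ref{prop:trans}. A curve of small geodesic curvature stays close to the great circle tangent to it over a bounded arc length such as $2\pi$.

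\textbf{Step 1: transversality.} Fix $\delta>0$ small, depending on $\delta'$. We want Proposition \ref{prop:trans}, which needs connectedness and $\Sigma\cap\partial B_1\neq\emptyset$. So we apply it to the component of $\Sigma$ containing $\gamma$. If that component misses $\partial B_1$, the argument in the proof of Proposition \ref{prop:trans} via Proposition \ref{prop:app1} still applies: either the component is nearly planar near $x$ with the plane passing near the origin, or we get the same contradiction. In both cases we obtain $|\langle \n_x, x/|x|\rangle|\le\delta$ on $\gamma$ once $R_1>K+1$ and $\epsilon<\epsilon_3(\delta)$. I expect this case distinction to be the main obstacle. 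The cleanest fix is to state Proposition \ref{prop:trans} only for the component through $\gamma$, whose only needed property is reaching points of norm of order $1$ relative to $K$; if it does not, it is an entire surface with small $|x||A|$, hence planar by Proposition \ref{prop:app1}, and the plane must nearly pass the origin.

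\textbf{Step 2: geodesic curvature of $\gamma$.} Let $T$ be the unit tangent of $\gamma$, and let $\nu$ be the unit normal to $\gamma$ in $\Sigma$. Then $\n$ and $\nu$ span the plane orthogonal to $T$, and that plane contains the radial direction $x$ (here $|x|=1$). Write the curvature vector as $\gamma''=-x+\kappa_g\, x\times T$. By Step 1, $\nu$ is within angle $O(\delta)$ of $\pm x$, and $x\times T$ is within $O(\delta)$ of $\pm\n$. Now $\gamma''\cdot\n = A(T,T)$, so $|A(T,T)|\le\epsilon R_1\cdot R_1^{-1}=\epsilon$ after scaling. Also $\gamma''\cdot \n = -x\cdot\n+\kappa_g\,(x\times T)\cdot\n$, with $|x\cdot \n|\le \delta$ and $|(x\times T)\cdot\n|\ge 1-O(\delta)$. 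Hence $|\kappa_g|\le 2(\epsilon+\delta)$.

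\textbf{Step 3: ODE comparison.} On $S^2$, compare $\gamma$ with the great circle $C_{x_1}$ starting at $x_1$ with the same unit tangent. Use the frame $(\gamma, T, \gamma\times T)$, which satisfies $\gamma'=T$, $T'=-\gamma+\kappa_g\,\gamma\times T$, and $(\gamma\times T)'=-\kappa_g T$. For $\kappa_g=0$ this system produces the great circle. It is linear with bounded coefficients, so Gronwall gives $|\gamma(s)-C_{x_1}(s)|\le C\sup|\kappa_g|\, s\,e^{Cs}$. For $s\le 2\pi$ this is at most $C'(\epsilon+\delta)$. Choosing $\delta$ and then $\epsilon_4\le\epsilon_3(\delta)$ so that $C'(\epsilon_4+\delta)<\delta'$, and undoing the scaling, the distance from $x_2$ to the great circle is less than $\delta' R_1$. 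Here $K$ is the constant from Proposition \ref{prop:trans} for this $\delta$.
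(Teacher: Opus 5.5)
Your Steps 2 and 3 are the paper's argument. The paper differentiates $\gamma'=\n\times\gamma/\abs{\gamma}$ and bounds the two resulting terms by $CR_1^{-1}\epsilon$, using \eqref{eqn:goodinf}, and by $C\delta R_1^{-1}$, using \eqref{eqn:trans}. This is exactly your bound $\abs{\kappa_g}\le 2(\epsilon+\delta)$, which you obtain from $\gamma''\cdot\n=A(T,T)$ instead. The paper then concludes by the same ODE stability estimate over arc length $2\pi R_1$ that your Gronwall step gives.

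The part to correct is Step 1. If the component through $\gamma$ does not meet $\partial B_1$, you claim it is planar with the plane passing near the origin. That is false, and no argument can save this case.

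\textbf{Counterexample.} Take the plane $\set{z=R_1/2}$. It is properly embedded in $\R^3\setminus B_1$ and has $A\equiv 0$, so \eqref{eqn:goodinf} holds for every $\epsilon$. But $\gamma$ is then the circle of radius $\frac{\sqrt3}{2}R_1$ at height $R_1/2$. Let $x_1,x_2$ be antipodal points on it; their intrinsic distance is $\frac{\sqrt3}{2}\pi R_1<2\pi R_1$. The distance from $x_2$ to the plane of the great circle tangent to $\gamma$ at $x_1$ is already $\frac{\sqrt3}{2}R_1$. So the conclusion of the lemma fails there.

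The contradiction in the proof of Proposition \ref{prop:trans} comes precisely from $\Sigma\cap\partial B_1\neq\emptyset$. The lemma therefore has to be read with the hypotheses of Proposition \ref{prop:trans}: $\Sigma$ connected and meeting $\partial B_1$. The paper assumes this tacitly when it invokes \eqref{eqn:trans}. Drop your case distinction, assume these hypotheses, and the rest of your proof goes through as written.
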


\begin{proof}
	$K$ is to be determined at the end of the proof.
Assume $\gamma$ is parametrized by arc-length and $x_1=\gamma(0)$. Since $\gamma$ is the intersection of $\Sigma$ and the sphere $\partial B_{R_1}$, we have
$$
	\gamma'(s)= \n_{\gamma(s)} \times \frac{\gamma(s)}{\abs{\gamma(s)}}.
$$
Let $\gamma''(s)$ be the covariant derivative of $\gamma'(s)$ as a vector field of $\partial B_{R_1}$ along $\gamma$. Then
$$
\gamma''(s)= \pi_{\gamma(s)}\left[ \left( \n_{\gamma(s)} \right)' \times \frac{\gamma(s)}{\abs{\gamma(s)}} + \n_{\gamma(s)} \times \left( \frac{\gamma(s)}{\abs{\gamma(s)}} \right)'\right],
$$
where $\pi_{\gamma(s)}$ is the projection onto the tangent plane of $\partial B_{R_1}$	 at $\gamma(s)$.
For the first term in the right-hand side above, we estimate, using the fact that $\abs{\gamma(s)}\equiv R_1$ and \eqref{eqn:goodinf}
$$
	\abs{\pi_{\gamma(s)}\left[ \left( \n_{\gamma(s)} \right)' \times \frac{\gamma(s)}{\abs{\gamma(s)}} \right]} \leq \abs{\n'_{\gamma(s)}} \leq CR_1^{-1}\epsilon.
$$
For the second term,  we have
$$
\pi_{\gamma(s)}\left[ \n_{\gamma(s)} \times \left( \frac{\gamma(s)}{\abs{\gamma(s)}} \right)' \right] = R^{-1}_1 (\n_{\gamma(s)})^\perp \times  \gamma'(s).
$$
Here $(\n_{\gamma(s)})^{\perp}$ is the projection of $\n_{\gamma(s)}$ onto the normal space of $\partial B_{R_1}$ at $\gamma(s)$. By \eqref{eqn:trans}, $\abs{\n_{\gamma(s)}\cdot \frac{\gamma(s)}{\abs{\gamma(s)}}}\leq \delta(\epsilon_4,K)$, where $\delta(\epsilon_4,K)$ is some function satisfying
$$
	\lim_{\epsilon_4\to 0; K\to \infty} \delta(\epsilon_4,K)=0.
$$
Hence,
$$
\abs{ \pi_{\gamma(s)} \left[  \n_{\gamma(s)} \times \left( \frac{\gamma(s)}{\abs{\gamma(s)}} \right)'  \right]} \leq C\delta(\epsilon_4,K) R_1^{-1}.
$$
In summary, we have obtained that
$$
\abs{\gamma''(s)}\leq C \delta(\epsilon_4,K) R_1^{-1}.
$$
The remaining part of the proof is a stability argument for the ODE system satisfied by a geodesics. More precisely, if $\tilde{\gamma}$ is the geodesic (great circle) parametrized so that $\tilde{\gamma}(0)=x_1$ and $\tilde{\gamma}'(0)=\gamma'(0)$. Then for $\abs{s}\leq 2\pi R_1$, 
$$
	d_{\partial B_{R_1}}(\gamma(s),\tilde{\gamma}(s))\leq C \delta(\epsilon_4,K) R_1^{-1} \times R_1^2 \leq \delta' R_1
$$
if we choose $\epsilon_4$ small and $K$ large.
\end{proof}

As a corollary, 
\begin{cor} \label{cor:circle}
    For any $\delta''>0$, there exist $\epsilon_5>0$ and $K>0$ such that if $\Sigma$ is a properly embedded Willmore surface in $\R^3 \setminus B_1$ satisfying \eqref{eqn:goodinf} with $\epsilon<\epsilon_5$ and $\gamma$ is one component of $\Sigma\cap \partial B_{R_1}$ (with $R_1>K+1$), then $\gamma$ is in the $\delta'' R_1$-neighborhood of some great circle on $\partial B_{R_1}$.
\end{cor}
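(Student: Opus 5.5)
The corollary should follow from Lemma \ref{lem:42} once we know that $\gamma$ is not too long, or more precisely that $\gamma$ closes up within intrinsic length about $2\pi R_1$. The plan is to take $\delta'$ small in terms of $\delta''$, apply Lemma \ref{lem:42} at a fixed base point $x_1=\gamma(0)$, and let $C_1$ be the great circle through $x_1$ tangent to $\gamma'(0)$. The lemma then says that the arc $\gamma([-2\pi R_1,2\pi R_1])$ lies in the $\delta' R_1$-neighborhood of $C_1$. If the whole closed curve $\gamma$ has length at most $4\pi R_1$, then this arc already covers $\gamma$, and we are done with $\delta''=\delta'$.

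The key step is therefore a length bound for $\gamma$. I would get it from the proof of Lemma \ref{lem:42} rather than from its statement. That proof gives geodesic curvature $\abs{\gamma''}\le C\delta(\epsilon_5,K)R_1^{-1}$, and ODE stability gives $d_{\partial B_{R_1}}(\gamma(s),\tilde\gamma(s))\le C\delta R_1$ for $\abs{s}\le 2\pi R_1 + 1$, say, where $\tilde\gamma$ is the unit-speed parametrization of $C_1$. Since $\tilde\gamma(2\pi R_1)=x_1=\gamma(0)$, the point $\gamma(2\pi R_1)$ is within $C\delta R_1$ of $\gamma(0)$, and $\gamma'(2\pi R_1)$ is close to $\gamma'(0)$ (the velocity error is also controlled, by $C\delta$).

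To close the curve, I would use that $\Sigma$ is embedded and, near $x_1$, is a graph over a disk of radius comparable to $R_1$ by Lemma \ref{lem:graph}. Near $x_1$, the intersection $\Sigma\cap\partial B_{R_1}$ inside the ball $B_{R_1/4}(x_1)$ is a single arc, namely the intersection of a nearly flat graph transverse to the sphere (transversality comes from Proposition \ref{prop:trans}). The point $\gamma(2\pi R_1)$ lies on $\Sigma\cap\partial B_{R_1}$ within $C\delta R_1\ll R_1/4$ of $x_1$, so it lies on that same arc, and hence $\gamma$ returns to $\gamma(0)$ after length at most $2\pi R_1+C\delta R_1$. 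This needs a little care. The graph piece around $x_1$ is a connected component, but another sheet of $\Sigma$ could pass near $x_1$. However, $\gamma(s)$ for $s$ near $2\pi R_1$ moves continuously, and its tangent direction nearly matches $\gamma'(0)$. Also, $\gamma(s)$ stays within $C\delta R_1$ of $\tilde\gamma(s)$, which sweeps through $x_1$, so $\gamma$ must cross near $x_1$. If it lay on a different sheet, that sheet would be a parallel graph, which alone does not prevent it. So I would instead argue by the disk picture: rescale so that $R_1=1$ and view everything as a graph over the plane $P_{x_1}$.

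This sheet-identification step is the main obstacle. If it fails, there is a fallback. Lemma \ref{lem:42} applied at every base point shows that each arc of $\gamma$ of length $2\pi R_1$ stays $\delta'R_1$-close to a great circle. Two such great circles $C_1$ and $C_2$ (based at $\gamma(0)$ and at $\gamma(\pi R_1)$) share an arc of length $\pi R_1$ on which both are $\delta'R_1$-close to $\gamma$. Elementary sphere geometry then forces $C_1$ and $C_2$ to be $C\delta'R_1$-close, and iterating along $\gamma$ shows that the whole of $\gamma$ is near a single great circle. The accumulated error grows with the number of steps, however, so this still requires the length bound. For that reason I would pursue the closing argument above as the main route.
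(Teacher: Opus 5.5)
Your reduction is sound. If the closed curve $\gamma$ has length at most $4\pi R_1$, every point of it lies within intrinsic distance $2\pi R_1$ of $x_1$, and Lemma~\ref{lem:42} at $x_1$ finishes the proof. But you never establish that length bound, and the local route you propose for it fails. The claim that $\Sigma\cap\partial B_{R_1}\cap B_{R_1/4}(x_1)$ is a single arc is false under the hypotheses: Lemma~\ref{lem:graph} only makes the \emph{component} of $\Sigma\cap B_{R_1/4}(x_1)$ through $x_1$ a flat graph. For instance, a properly embedded minimal surface with several parallel ends (a Costa--Hoffman--Meeks surface, say) satisfies $\abs{x}\abs{A}\to 0$. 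Yet for $x_1$ on one end, $\partial B_{R_1}\cap B_{R_1/4}(x_1)$ meets $\Sigma$ in several nearly parallel arcs at mutual distance $O(\log R_1)\ll\delta R_1$. So knowing that $\gamma(2\pi R_1)$ is $C\delta R_1$-close to $x_1$, with nearly the same tangent, cannot tell you by any argument local to $\Sigma$ near $x_1$ that $\gamma$ has returned to its own arc rather than to a parallel one. ``Viewing everything as a graph over $P_{x_1}$'' presupposes a global graph structure you do not have. Your chaining fallback is, as you note yourself, circular. So the key step of the proposal is missing.

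What is missing is a global, topological use of the fact that $\gamma$ is an embedded closed curve on the sphere, and this is how the paper proceeds. Let $P$ be the plane through $0$, $x_1$ and $\gamma'(x_1)$, and let $Q\subset\partial B_{R_1}$ be the band of points at distance $<R_1/5$ from $P$. Suppose some $x_2\in\gamma$ lay outside $Q$. Then Lemma~\ref{lem:42} at $x_1$ and at $x_2$ produces two arcs of $\gamma$, each of length about $2\pi R_1$, staying close to two great circles that meet transversally, since the second one passes through $x_2$. The two arcs would then have to cross, contradicting embeddedness of $\gamma$. Hence $\gamma\subset Q$. Being an embedded closed curve in this annulus that locally follows great circles close to the core one, $\gamma$ winds around $Q$ exactly once. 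This yields the length bound, and a last application of Lemma~\ref{lem:42} gives the corollary. The same confinement is what would rescue your closing idea. Once every strand of $\gamma$ near $x_1$ is known to run the same way around $Q$, consider $\gamma$ from $x_1$ to its first return to the meridian of $x_1$, closed off by a short meridian segment. This is a Jordan curve that the rest of $\gamma$ could only re-cross backwards, which is impossible, so $\gamma$ closes after one turn. Without that confinement step, nothing local near $x_1$ excludes the spiralling you were worried about.
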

\begin{proof}
First, pick any $x_1\in \gamma$. Let $P$ be the plane containing the origin, $x_1$ and the vector $\gamma'(x_1)$. Denote by $\gamma_1$ the part of $\gamma$ consisting of points whose intrinsic distance to $x_1$ is smaller than $\pi (R_1+1)$. Let $Q$ be the subset of points in $\partial B_{R_1}$ whose distance to $P$ is smaller than $R_1/5$.

We first claim that the image of $\gamma$ is a subset of $Q$. If otherwise, then there is $x_2\in \gamma$ with $x_2\notin Q$. Denote by $\gamma_2$ the part of $\gamma$ consisting of points whose intrinsic distance to $x_2$ is smaller than $\pi (R_1+1)$. As illustrated in Figure \ref{fig:1}, as long as $\epsilon_5$ is small and $K$ is large, Lemma \ref{lem:42} implies that $\gamma_1$ and $\gamma_2$ are each very close to a distinct great circle. Since these great circles intersect, $\gamma_1$ and $\gamma_2$ mush also intersect, contradicting the fact that $\gamma$ is an embedded curve on $\partial B_{R_1}$ without self-intersections.
\begin{figure}[htb]
    \centering
    \includegraphics[width=0.4\linewidth]{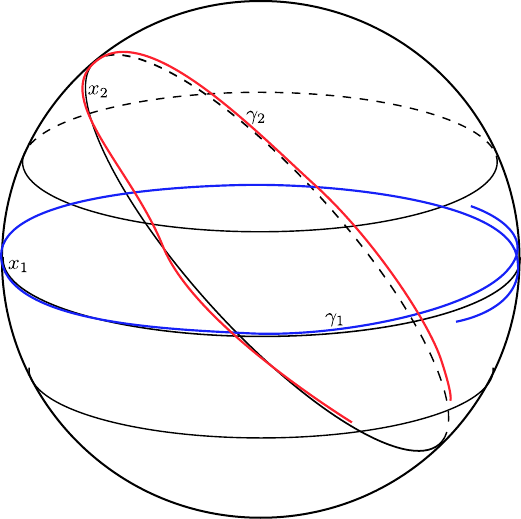}
    \caption{Picture for Corollary \ref{cor:circle}}
    \label{fig:1}
\end{figure}

Finally, since $\gamma$ is a closed curve in $Q$ that is topologically a cylinder, $\gamma$ winds around $Q$ only once. By Lemma \ref{lem:42} again, we can choose $\epsilon_5$ small and $K$ large so that it is a closed curve within $\delta'' R_1$ distance to some great circle in $P$.
\end{proof}

\subsection{The proof of Theorem \ref{thm:main2}}

Now, let's prove Theorem \ref{thm:main2}:
\begin{proof}[Proof of Theorem \ref{thm:main2}]
	The statement of Theorem \ref{thm:main2} is invariant under scaling. Hence, we may assume that \eqref{eqn:main2} implies \eqref{eqn:goodinf} for $\epsilon\leq \epsilon_0$. The desired constant $\epsilon_0$ is determined as follows.
	Let $\epsilon_1$ and $R_0$ (hence $R=R_0+1$) be given by Theorem \ref{thm:3circle}. For $\delta''>0$ to be determined by $\epsilon_1$, Corollary \ref{cor:circle} gives $\epsilon_5$ and $K>0$ such that for any $R^{i}>K+1$ (for some $i\in \mathbb N$), a component $\gamma$ of $\Sigma\cap \partial B_{R^{i}}$ is $\delta'' R^{i}$-close to a great circle on $\partial B_{R^{i}}$. In particular, after scaling down by $R^{i-1}$, the curve $\gamma$ is $\delta''R$-close to a round circle of radius $R$. By Lemma \ref{lem:graph} and Proposition \ref{pro:combine}, if $\epsilon_0$ and $\delta''$ are small enough, after the same scaling, the component of $\Sigma$ containing $\gamma$ in $B_{R^{i+3}}\setminus B_{R^{i}}$ satisfies (A2) (with $\epsilon<\epsilon_1$) of Theorem \ref{thm:3circle}. In doing so, $\delta''$ is determined by $\epsilon_1$ alone and $\epsilon_0$ is required to be smaller than $\epsilon_5$ that depends on $\epsilon_1$. 

As a result of the above analysis, the intersection of $\Sigma$ with $\partial B_r$ for $r>K+1$ is a finite union of closed curves that are almost round circles and hence the number of such curves is independent of $r$. This proves the claim about the finiteness of ends.

For $i_0$ sufficiently large, $\Sigma\setminus B_{R^{i_0}}$ has finitely many connected components, each corresponding to an end. Let $\Sigma_0$ be one of them. We would like to apply Theorem \ref{thm:3circle} to $\Sigma_0$. We have shown that for $i>i_0$, $\Sigma_0\cap (B_{R^{i+4}}\setminus B_{R^{i+1}})$ satisfies (A2) of Theorem \ref{thm:3circle}. In order to apply Theorem \ref{thm:3circle}, it remains to verify \eqref{eqn:residue}. If the number of ends of $\Sigma$ is one, then $\Sigma_0$ is the only end and $\Sigma_0\cap \partial B_r$ ($r>R^{i_0}$) is the boundary of a bounded piece of $\Sigma$, which implies \eqref{eqn:residue}. If instead of the number of ends, we assume \eqref{eqn:main1}, then \eqref{eqn:residue} follows by taking the limit $r\to \infty$ in the definition of $\tau_1$ and $\tau_2$. More precisely, since the definition of $\tau_1$ and $\tau_2$ (see \eqref{eqn:tau_c} and \eqref{eqn:tau_S}) is independent of $r$, we have
    \begin{equation}
        \tau_2(\Sigma_0,S)= \lim_{r\to \infty} \int_{\Sigma_0\cap \partial B_r} - 2 (Sx\cdot \n) \pfrac{H}{\nu} + 2 H \pfrac{(Sx\cdot \n)}{\nu} + \abs{H}^2 (Sx\cdot \nu) ds. 
    \end{equation}
By \eqref{eqn:main1} and Corollary \ref{cor:circle}, $\tau_2(\Sigma_0,S)=0$. The proof for $\tau_1$ is similar.

Theorem \ref{thm:3circle} then implies that
\[
	\int_{\Sigma_0\cap (B_{R^{i+1}}\setminus B_{R^i})} \abs{A}^2 dV_{\Sigma_0} \leq C e^{-\alpha i}
\]
for some $\alpha>0$, from which $\norm{A}_{L^2(\Sigma)}<\infty$ follows. 
\end{proof}

\appendix

\section{A three-circle property for biharmonic functions}
This section is devoted to the proof of Lemma \ref{lem:lem32}. For any function $u$ on the plane, let $(r,\theta)$ be the polar coordinates. We start with the following formula:
\begin{equation}
	\label{eqn:hessianu}
\abs{\nabla^{2}u}^{2}
=(\partial_{r}^{2}u)^{2}
+\frac{2}{r^{2}}\Bigl(\partial_{r\theta}^{2}u-\frac1r\partial_{\theta}u\Bigr)^{2}
+\frac1{r^{4}}\bigl(\partial_{\theta}^{2}u+r\partial_{r}u\bigr)^{2}.
\end{equation}
The computation for the above formula is omitted.

Using \eqref{eqn:hessianu}, we compute
\begin{eqnarray*}
	&& \int_{D_R\setminus D_r} \abs{\nabla^2 u}^2 d x \\
	&=& \int_{D_R\setminus D_r} 
\left[(\partial_{r}^{2}u)^{2} +\frac{2}{r^{2}}\Bigl(\partial_{r\theta}^{2}u-\frac1r\partial_{\theta}u\Bigr)^{2} +\frac1{r^{4}}\bigl(\partial_{\theta}^{2}u+r\partial_{r}u\bigr)^{2}\right] r\,dr\,d\theta\\
&=&\int_0^{2 \pi} \int_{\log r}^{\log R} e^{-2 t}\left[\left(\partial_t^2 u-\partial_t u\right)^2+2\left(\partial_{t \theta}^2 u-\partial_\theta u\right)^2+\left(\partial_\theta^2 u+\partial_t u\right)^2\right] d t\, d \theta.
\end{eqnarray*}
Motivated by this, we define
\[
Q_u(t):=\int_0^{2\pi} e^{-2 t}\left[\left(\partial_t^2 u-\partial_t u\right)^2+2\left(\partial_{t \theta}^2 u-\partial_\theta u\right)^2+\left(\partial_\theta^2 u+\partial_t u\right)^2\right] d\theta,
\]
so that
\[
\int_{D_R\setminus D_r} \abs{\nabla^2 u}^2 dx = \int_{\log r}^{\log R} Q_u(t)\,dt.
\]

\subsection{Reduce to linear algebra}

Recall from Section~\ref{sec:pre} that a biharmonic function $u$ on an annulus has the expansion \eqref{eqn:biexp}.  Setting $t=\log r$ turns the radial functions in \eqref{eqn:biexp} into exponentials in $t$, so biharmonic functions are linear combinations (converging in $L^2$) of
\begin{gather*}
	1, t, e^{2t}, te^{2t} , \\
	e^t \cos \theta, e^{-t} \cos \theta, e^{3t} \cos \theta, te^t \cos \theta, \text{(same for $\sin$)} \\
	e^{nt} \cos n\theta, e^{-nt}\cos n\theta, e^{(n+2)t} \cos n\theta, e^{-(n-2)t} \cos n\theta,  \text{(same for $\sin$)} .
\end{gather*}
Since the eigenfunctions are orthogonal in the $L^2$ inner product of $S^1$, it suffices to prove Lemma \ref{lem:lem32} for
\begin{align*}
 u_0 & = a + bt + c e^{2t} + d te^{2t} \\	
 u_1 & = \left( a e^t + b e^{-t} + c e^{3t} + d te^t \right) \cos \theta \\	
 u_n & = \left( a e^{nt} + b e^{-nt} + c e^{(n+2)t} + d e^{-(n-2)t} \right) \cos n\theta.
\end{align*}
Note that the assumption that $D_1$ and $\tilde{D}_1$ vanish in Lemma \ref{lem:lem32} amounts to the claim that $d=0$ in $u_1$.
If we write $Q_*$ for $Q_{u_*}$, we summarize some direct computations in the form of a lemma:
\begin{lem}
	\label{lem:Q}
	\begin{align*}
		Q_{u_0}(t) &=2\pi e^{-2t}\Big[\big(-b+(2c+3d)e^{2t}+2dte^{2t}\big)^2+\big(b+(2c+d)e^{2t}+2dte^{2t}\big)^2\Big]\\
		Q_{u_1}(t)&=16\pi b^2e^{-4t}-8\pi bde^{-2t}+4\pi d^2+24\pi cde^{2t}+48\pi c^2e^{4t}\\
		Q_{u_n}(t)&=4\pi a^2n^2(n-1)^2e^{2(n-1)t}+4\pi b^2n^2(n+1)^2e^{-2(n+1)t}+4\pi c^2(n+1)^2(n^2+2)e^{2(n+1)t} \\
			  & +4\pi d^2(n-1)^2(n^2+2)e^{2(1-n)t}+8\pi ac\,n^2(n^2-1)e^{2nt} \\
			  & +8\pi bd\,n^2(n^2-1)e^{-2nt}-16\pi cd\,(n^2-1)e^{2t}.
	\end{align*}
\end{lem}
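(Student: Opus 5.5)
The plan is a direct computation, organised so that each $Q_{u_*}$ becomes a quadratic form in the coefficients $(a,b,c,d)$ whose matrix entries are simple polynomials in the exponents.

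\textbf{Step 1: reduce to one radial function.} Write $u=f(t)\cos n\theta$ and substitute into the definition of $Q_u(t)$. For this $u$,
\[
\partial_t^2u-\partial_tu=(f''-f')\cos n\theta,\qquad
\partial_{t\theta}^2u-\partial_\theta u=-n(f'-f)\sin n\theta,\qquad
\partial_\theta^2u+\partial_tu=(f'-n^2f)\cos n\theta .
\]
Integrating in $\theta$ uses $\int_0^{2\pi}\cos^2 n\theta\,d\theta=\int_0^{2\pi}\sin^2 n\theta\,d\theta=\pi$ when $n\ge1$. This gives
\[
Q_{u_n}(t)=\pi e^{-2t}\Big[(f''-f')^2+2n^2(f'-f)^2+(f'-n^2f)^2\Big],\qquad n\ge 1 .
\]
For $n=0$ the $\theta$-integral gives $2\pi$ and the middle term is absent, so
\[
Q_{u_0}=2\pi e^{-2t}\big[(f''-f')^2+(f')^2\big].
\]

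\textbf{Step 2: the case $n=0$.} Take $f=a+bt+ce^{2t}+dte^{2t}$. Then
\[
f'=b+(2c+d)e^{2t}+2dte^{2t},\qquad f''-f'=-b+(2c+3d)e^{2t}+2dte^{2t}.
\]
Inserting these into the formula of Step 1 gives the stated $Q_{u_0}$ immediately; nothing further needs expanding.

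\textbf{Step 3: pure exponentials ($n\ge2$, and $n=1$ apart from the $te^t$ term).} For $f=e^{\lambda t}$, the bracket in Step 1 equals $e^{2\lambda t}|v_\lambda|^2$, where
\[
v_\lambda:=\bigl(\lambda(\lambda-1),\ \sqrt2\,n(\lambda-1),\ \lambda-n^2\bigr)\in\R^3 .
\]
For a sum $f=\sum_j \alpha_j e^{\lambda_j t}$, the bracket becomes $\sum_{j,k}\alpha_j\alpha_k\,(v_{\lambda_j}\cdot v_{\lambda_k})\,e^{(\lambda_j+\lambda_k)t}$. The computation therefore reduces to evaluating the dot products for $\lambda\in\{n,-n,n+2,2-n\}$.

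The diagonal entries are as follows.
\begin{itemize}
\item $\lambda=n$: $v_n=(n(n-1),\sqrt2 n(n-1),n(1-n))$, so $|v_n|^2=4n^2(n-1)^2$.
\item $\lambda=-n$: $|v_{-n}|^2=4n^2(n+1)^2$.
\item $\lambda=n+2$: $|v_{n+2}|^2=4(n+1)^2(n^2+2)$.
\item $\lambda=2-n$: $|v_{2-n}|^2=4(n-1)^2(n^2+2)$.
\end{itemize}

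For the off-diagonal entries I will check that $v_n\cdot v_{-n}$, $v_n\cdot v_{2-n}$, $v_{-n}\cdot v_{n+2}$ and $v_{n+2}\cdot v_{2-n}$ have the values required by the lemma. In particular, the cross terms $ab$, $ad$ and $bc$ should vanish, which is an orthogonality of the corresponding $v_\lambda$. The remaining products should give the stated coefficients:
\begin{itemize}
\item $2\,v_n\cdot v_{n+2}=2n^2(n^2-1)$, for the $ac$ term;
\item $2\,v_{-n}\cdot v_{2-n}=2n^2(n^2-1)$, for the $bd$ term;
\item $2\,v_{n+2}\cdot v_{2-n}=-4(n^2-1)$, for the $cd$ term.
\end{itemize}
After the common factor $\pi e^{-2t}$, each of these carries the correct exponential $e^{(\lambda_j+\lambda_k-2)t}$. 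All of this is routine polynomial arithmetic.

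\textbf{Step 4: the case $n=1$, the expected obstacle.} Here $u_1$ contains the resonant term $te^{t}$, which is not a pure exponential. I will handle it by differentiating the exponential family in $\lambda$: since $te^{t}=\partial_\lambda e^{\lambda t}|_{\lambda=1}$, its bracket vector is
\[
w=\bigl(\partial_\lambda v_\lambda\bigr)|_{\lambda=1}+t\,v_1 .
\]
For $n=1$ one has $v_1=0$, which is exactly why the term is resonant. Hence $w=(1,\sqrt2,1)$, independent of $t$, and $|w|^2=4$ gives the $4\pi d^2$ term.

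The remaining $n=1$ entries follow from Step 3 with $n=1$:
\begin{itemize}
\item $v_{-1}=(2,-2\sqrt2,-2)$, giving $16\pi b^2e^{-4t}$;
\item $v_3=(6,2\sqrt2,2)$, giving $48\pi c^2e^{4t}$;
\item the cross products $2\,v_{-1}\cdot w=-8$ and $2\,v_3\cdot w=24$, giving $-8\pi bde^{-2t}$ and $24\pi cde^{2t}$;
\item $v_1=0$, so the coefficient $a$ disappears entirely, and the remaining cross term $v_{-1}\cdot v_3=0$ (so there is no $bc$ term).
\end{itemize}

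The only real risk is bookkeeping: signs in the cross terms and the resonant $n=1$ modes. I would double-check those, and the $\lambda$-derivative trick, against a symbolic computation.
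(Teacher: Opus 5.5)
Your route is the same direct computation the paper intends (the paper omits it as routine). Your $v_\lambda$ bookkeeping is correct, and so are the $n=0$ case, the treatment of the resonant $te^{t}$ term via $\partial_\lambda$ with $v_1=0$, all four diagonal entries, and the vanishing of the $ab$, $ad$, $bc$ cross terms.

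There is one slip to fix. For $n\ge 2$, the three nonzero cross-term values you list are a factor $4$ too small. Your diagonal entries, such as $|v_n|^2=4n^2(n-1)^2$, use the normalization with prefactor $\pi$. The values $2n^2(n^2-1)$ and $-4(n^2-1)$, however, are the lemma's coefficients divided by $4\pi$; they are the numbers appearing in (iii) of Lemma~\ref{lem:claims}. The factored forms
\[
v_n=n(n-1)(1,\sqrt2,-1),\quad v_{-n}=n(n+1)(1,-\sqrt2,-1),
\]
\[
v_{n+2}=(n+1)(n+2,\sqrt2\,n,2-n),\quad v_{2-n}=(n-1)(n-2,-\sqrt2\,n,-(n+2))
\]
give
\[
v_n\cdot v_{n+2}=v_{-n}\cdot v_{2-n}=4n^2(n^2-1),\qquad v_{n+2}\cdot v_{2-n}=-8(n^2-1).
\]
So $2\,v\cdot v$ equals $8n^2(n^2-1)$, $8n^2(n^2-1)$ and $-16(n^2-1)$. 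With the prefactor $\pi$, these are exactly the lemma's coefficients $8\pi$, $8\pi$ and $-16\pi$. The check fails with the values as you stated them and goes through with these.
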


The proof of Lemma~\ref{lem:lem32} is then reduced to the claim that there exists $L_0>0$ such that for any $L>L_0$, we have
\begin{equation}\label{eqn:reduction}
Q_*(0) \le \frac{1}{4} \bigl( Q_*(L) + Q_*(-L) \bigr).
\end{equation}
In fact, by translation, the above inequality implies
$$
Q_*(t) \le \frac{1}{4} \bigl( Q_*(t+L) + Q_*(t-L) \bigr)
$$
for any $t$. Integrating the above over $[2L,3L]$ and setting $R_0=e^{L_0}$ completes the proof of Lemma \ref{lem:lem32}.

Evaluating \eqref{eqn:reduction} for $u_0$, $u_1$ and $u_n$ yields three inequalities, which we collect in the following lemma.  The algebraic computations used in the proof of Lemma~\ref{lem:claims} are omitted.

\begin{lem}\label{lem:claims}
The proof of Lemma~\ref{lem:lem32} reduces to verifying the following three inequalities.

\smallskip\noindent\textup{(i)} For all real $b,c,d$ and all sufficiently large $L>0$,
\begin{equation}\label{eq:claim0}
\begin{aligned}
&(\cosh 2L-2)\bigl(b^{2}+4c^{2}+8cd+5d^{2}\bigr)+2bd\\
&+8Ld(c+d)\sinh 2L+4L^{2}d^{2}\cosh 2L\ge 0.
\end{aligned}
\end{equation}

\smallskip\noindent\textup{(ii)} For all real $b,c$ and all $L>0$,
\begin{equation}\label{eq:claim1}
8\pi\bigl(b^2+3c^2\bigr)(\cosh 4L-2)\ge 0.
\end{equation}

\smallskip\noindent\textup{(iii)} For every integer $n\ge 2$, all real $a,b,c,d$ and all sufficiently large $L>0$,
\begin{equation}\label{eq:claimn}
\begin{aligned}
&(n-1)^2\bigl(a^2n^2+d^2(n^2+2)\bigr)\bigl(\cosh 2(n-1)L-2\bigr)\\
&+(n+1)^2\bigl(b^2n^2+c^2(n^2+2)\bigr)\bigl(\cosh 2(n+1)L-2\bigr)\\
&+2n^2(n^2-1)(ac+bd)\bigl(\cosh 2nL-2\bigr)\\
&-4(n^2-1)cd\bigl(\cosh 2L-2\bigr)\ge 0.
\end{aligned}
\end{equation}
\end{lem}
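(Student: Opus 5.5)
The plan is to substitute the three families $u_0,u_1,u_n$ into \eqref{eqn:reduction} using the explicit formulas of Lemma \ref{lem:Q}. Then I clear the common factor and use the identity
\[
e^{\lambda L}+e^{-\lambda L}-2\cdot 2 = 2(\cosh\lambda L-2).
\]
For each exponential $e^{\lambda t}$ appearing in $Q_*(t)$, the quantity $Q_*(L)+Q_*(-L)-4Q_*(0)$ contributes its coefficient times $2(\cosh \lambda L-2)$. The terms $t e^{2t}$ and $t^2e^{2t}$ occurring in $Q_{u_0}$ need separate treatment, and they produce the $L\sinh 2L$ and $L^2\cosh 2L$ terms.

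For $u_1$ with $d=0$ (the hypothesis of Lemma \ref{lem:lem32}), $Q_{u_1}(t)=16\pi b^2e^{-4t}+48\pi c^2e^{4t}$. The difference is then exactly $2(16\pi b^2+48\pi c^2)(\cosh 4L-2)$, which is (ii) up to a positive constant. For $u_n$, the term-by-term pairing gives precisely the displayed expression in (iii). The exponents $2(n-1)$, $-2(n+1)$, $2(n+1)$, $2(1-n)$, $2n$, $-2n$ and $2$ pair off so that $e^{\pm\lambda t}$ share the same $\cosh\lambda L$. For $u_0$, I expand the two squares in $Q_{u_0}$, multiply by $e^{-2t}$ to get terms in $e^{-2t}$, constants, $e^{2t}$, $te^{2t}$ and $t^2 e^{2t}$, and collect. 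This should reproduce (i) after dividing by a positive constant. This bookkeeping is routine and would be omitted, as the paper does.

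Then I verify the inequalities themselves. Claim (ii) is immediate once $\cosh 4L\ge 2$, so the statement ``for all $L>0$'' in (ii) should presumably be read as ``for $L$ large''. For (i), I view the left side as a quadratic form in $(b,c,d)$. Its leading behaviour as $L\to\infty$ is $\tfrac12e^{2L}$ times
\[
b^2+4c^2+8cd+5d^2+8Ld(c+d)+4L^2d^2 = b^2+(2c+2d+2Ld)^2+d^2 .
\]
This is positive definite, uniformly in the sense that its smallest eigenvalue stays bounded below after rescaling $d\mapsto d/L$. The error terms $-2(\ldots)$ and $2bd$ are $O(1)$ relative to $e^{2L}$ times the form. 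So for $L$ large the whole form is nonnegative.

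For (iii), I split the form into the $(a,c)$ block and the $(b,d)$ block, which are coupled only through the $cd$ term. For $n\ge2$, the dominant pieces are $c^2e^{2(n+1)L}$ and $a^2 e^{2(n-1)L}$ (respectively $b^2$, $d^2$) together with the cross term $ac\,e^{2nL}$. The $2\times2$ determinant for $(a,c)$ at leading order is
\[
(n-1)^2(n+1)^2n^2(n^2+2)-n^4(n^2-1)^2=n^2(n^2-1)^2\cdot 2>0,
\]
since $e^{2(n-1)L}e^{2(n+1)L}=e^{4nL}$ and the powers balance. The same computation works for $(b,d)$. The cross term $cd\cosh 2L$ is negligible against $\sqrt{c^2e^{2(n+1)L}\,d^2e^{2(n-1)L}}=|cd|e^{2nL}$ because $n\ge2$. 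The main obstacle is making these ``large $L$'' estimates uniform in $n$, so that a single $L_0$ works for all $n\ge2$. I would handle this by normalizing each monomial by its $\cosh$ and checking that the positive determinant margin $2n^2(n^2-1)^2$ dominates the $-2$ corrections and the $cd$ term with constants independent of $n$. The case $n=2$, where $\cosh 2(n-1)L=\cosh 2L$, needs the most care and would be checked explicitly.
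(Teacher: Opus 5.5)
Your proposal is correct and follows the same route as the paper's omitted computation: substitute Lemma~\ref{lem:Q} into \eqref{eqn:reduction}, let each $e^{\lambda t}$ term contribute $2(\cosh\lambda L-2)$ times its coefficient, let the $te^{2t}$ and $t^2e^{2t}$ terms of $Q_{u_0}$ produce $2L\sinh 2L$ and $2L^2\cosh 2L$, and divide by $8\pi$ to get \eqref{eq:claim0} and \eqref{eq:claimn}, while $d=0$ gives $32\pi(b^2+3c^2)(\cosh 4L-2)$ for \eqref{eq:claim1}. You are also right that \eqref{eq:claim1} fails for small $L$ (where $\cosh 4L<2$), so ``for all $L>0$'' there should read ``for $L$ sufficiently large,'' which is all the reduction needs.
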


To finish the proof of Lemma \ref{lem:lem32}. It suffices to prove the three inequalities above. While \eqref{eq:claim1} is trivial, the proofs for \eqref{eq:claim0} and \eqref{eq:claimn} are quite involved. They are presented in the next two subsections.

\subsection{Proof for \eqref{eq:claim0}}

\begin{proof}
Set
\[
x = \cosh 2L,\qquad s = \sinh 2L,\qquad A = x-2 = \cosh 2L-2.
\]
For \(L>\frac12\operatorname{arccosh}2\) we have \(x>2\) and therefore \(A>0\).  
By Lemma~\ref{lem:claims}\,(i) we must show
\[
E := (\cosh 2L-2)(b^{2}+4c^{2}+8cd+5d^{2}) + 2bd + 8Ld(c+d)\sinh 2L + 4L^{2}d^{2}\cosh 2L \ge 0.
\]
Substituting $x$, $s$ and $A$, the left‑hand side becomes a quadratic form in \(b,c,d\):
\[
\begin{aligned}
E &= A(b^{2}+4c^{2}+8cd+5d^{2}) + 2bd + 8Ld(c+d)s + 4L^{2}d^{2}x\\[2pt]
  &= A b^{2} + 2bd + 4A c^{2} + 8(A+Ls)cd + \bigl(5A + 8Ls + 4L^{2}x\bigr)d^{2}.
\end{aligned}
\]

We complete the square in \(b\) and \(c\).  The terms containing $b$ are $A b^{2}+2bd$:
\[
A b^{2} + 2bd = A\Bigl(b^{2} + \frac{2d}{A}b\Bigr)
= A\Bigl(b+\frac{d}{A}\Bigr)^{2} - \frac{d^{2}}{A}.
\]

The terms containing $c$ are $4A c^{2} + 8(A+Ls)cd$:
\[
\begin{aligned}
4A c^{2} + 8(A+Ls)cd
&= 4A\Bigl(c^{2} + \frac{2(A+Ls)}{A}\,cd\Bigr)\\[2pt]
&= 4A\Bigl(c+\frac{A+Ls}{A}d\Bigr)^{2} - 4\frac{(A+Ls)^{2}}{A}d^{2}.
\end{aligned}
\]

Substituting these back,
\[
\begin{aligned}
E &= A\Bigl(b+\frac{d}{A}\Bigr)^{2} + 4A\Bigl(c+\frac{A+Ls}{A}d\Bigr)^{2} \\
  &\quad + \Bigl[5A + 8Ls + 4L^{2}x - \frac{1}{A} - \frac{4(A+Ls)^{2}}{A}\Bigr]d^{2}.
\end{aligned}
\]

Now simplify the coefficient of \(d^{2}\).  Denote it by $C$:
\[
\begin{aligned}
C &= 5A + 8Ls + 4L^{2}x - \frac{1}{A} - \frac{4(A^{2}+2ALs+L^{2}s^{2})}{A}\\[2pt]
  &= 5A + 8Ls + 4L^{2}x - \frac{1}{A} - 4A - 8Ls - \frac{4L^{2}s^{2}}{A}\\[2pt]
  &= A - \frac{1}{A} + 4L^{2}\Bigl(x - \frac{s^{2}}{A}\Bigr).
\end{aligned}
\]

Using \(s^{2} = \cosh^{2}2L - 1 = x^{2}-1\) and \(A=x-2\),
\[
x - \frac{s^{2}}{A} = x - \frac{x^{2}-1}{x-2}
= \frac{x(x-2)-(x^{2}-1)}{x-2}
= \frac{1-2x}{x-2}.
\]

Hence
\[
\begin{aligned}
C &= A - \frac{1}{A} + 4L^{2}\cdot\frac{1-2x}{x-2}\\[2pt]
  &= \frac{A^{2}-1}{A} + \frac{4L^{2}(1-2x)}{A}\\[2pt]
  &= \frac{A^{2}-1 + 4L^{2}(1-2x)}{A}.
\end{aligned}
\]

Now substitute \(A = x-2\) and \(A^{2} = (x-2)^{2} = x^{2}-4x+4\):
\[
\begin{aligned}
C &= \frac{(x^{2}-4x+4)-1 + 4L^{2} - 8L^{2}x}{x-2}\\[2pt]
  &= \frac{x^{2} - 4x + 3 + 4L^{2} - 8L^{2}x}{x-2}\\[2pt]
  &= \frac{x^{2} - 4(1+2L^{2})x + (4L^{2}+3)}{x-2}.
\end{aligned}
\]

Define the numerator
\[
N(L) := x^{2} - 4(1+2L^{2})x + (4L^{2}+3), \qquad x=\cosh 2L.
\]
Because \(A = x-2 > 0\) for the $L$ we consider, the denominator $x-2$ is positive.  Thus $\operatorname{sgn}(C)=\operatorname{sgn}\bigl(N(L)\bigr)$.

Now examine \(N(L)\) for large \(L\).  Since \(\cosh 2L \sim \frac12 e^{2L}\to\infty\), the leading term \(x^{2}\) dominates:
\[
\lim_{L\to\infty} \frac{N(L)}{x^{2}}
= \lim_{L\to\infty}\Bigl(1 - \frac{4(1+2L^{2})}{x} + \frac{4L^{2}+3}{x^{2}}\Bigr) = 1 > 0.
\]
By continuity, there exists \(L_{0}>0\) (independent of $b,c,d$) such that \(N(L)\ge 0\) for all \(L\ge L_{0}\).  For any such \(L\) we have \(A>0\) and \(C\ge 0\).

Therefore, when \(L\ge L_{0}\), the expression $E$ takes the form
\[
E = A\Bigl(b+\frac{d}{A}\Bigr)^{2} + 4A\Bigl(c+\frac{A+Ls}{A}d\Bigr)^{2} + C\,d^{2},
\]
with $A>0$, $4A>0$ and $C\ge 0$.  Each term is a non‑negative multiple of a square, so $E\ge 0$ for all real $b,c,d$ (the coefficient $a$ does not appear, so the statement holds for any $a$).  This completes the proof.
\end{proof}

\subsection{Proof for \eqref{eq:claimn}}

\begin{pro}
Let $n\ge 2$ be an integer and let $L>0$.  Define
\begin{align*}
X&=\cosh 2(n-1)L-2, & Y&=\cosh 2(n+1)L-2,\\[2pt]
Z&=\cosh 2nL-2, & W&=\cosh 2L-2.
\end{align*}
Then for all $a,b,c,d\in\mathbb{R}$,
\begin{multline}\label{eq:main}
(n-1)^2\bigl(a^2n^2+d^2(n^2+2)\bigr)X
+(n+1)^2\bigl(b^2n^2+c^2(n^2+2)\bigr)Y\\
+2n^2(n^2-1)(ac+bd)Z
-4(n^2-1)cdW \ge 0.
\end{multline}
\end{pro}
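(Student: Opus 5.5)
The plan is to treat the left-hand side of \eqref{eq:main} as a quadratic form in $(a,b,c,d)$ and show it is positive semidefinite. Its structure suggests the splitting. The variables pair as $(a,c)$ and $(b,d)$ through the $Z$-terms, and the only coupling between the two pairs is the term $-4(n^2-1)cdW$. Write the form as $q_1(a,c)+q_2(b,d)-4(n^2-1)cdW$, where
\begin{align*}
q_1(a,c)&=(n-1)^2n^2Xa^2+2n^2(n^2-1)Zac+(n+1)^2(n^2+2)Yc^2,\\
q_2(b,d)&=(n+1)^2n^2Yb^2+2n^2(n^2-1)Zbd+(n-1)^2(n^2+2)Xd^2.
\end{align*}

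\emph{Range of $L$.} Before any estimates, I will fix the range of $L$ in which the inequality is meaningful. The statement says ``all $L>0$'', but it cannot hold for $L$ near $0$. Taking $b=c=d=0$ and $a=1$, the form reduces to $(n-1)^2n^2X$, and $X<0$ whenever $\cosh 2(n-1)L<2$. I will therefore prove the inequality for all $L$ with $\cosh 2L\ge 2$, and for all sufficiently large $L$ uniformly in $n$ where the margin requires it. This is exactly what Lemma~\ref{lem:claims}(iii) and the reduction \eqref{eqn:reduction} use. In this range $W,X,Z,Y\ge 0$, since $\cosh$ is increasing and $1\le n-1<n<n+1$.

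\emph{Step 1: each block is positive definite with a quantified margin.} $q_1$ is positive semidefinite if and only if its diagonal entries are nonnegative and
\[
(n^2+2)XY\ \ge\ n^2Z^2 .
\]
$q_2$ gives the same determinant condition. The diagonal entries are nonnegative, so this determinant condition is the inequality to establish. Using $\cosh\alpha\cosh\beta=\tfrac12(\cosh(\alpha+\beta)+\cosh(\alpha-\beta))$, I will expand $XY$ and $Z^2$ as linear combinations of $\cosh 4nL$, $\cosh 4L$, $\cosh 2(n\pm1)L$, $\cosh 2nL$ and constants. The top terms are $\tfrac12(n^2+2)\cosh 4nL$ on the left and $\tfrac12 n^2\cosh 4nL$ on the right. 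This leaves a surplus of $\cosh 4nL$, which dominates the lower-order terms $-2(n^2+2)(\cosh 2(n-1)L+\cosh 2(n+1)L)+4n^2\cosh 2nL+\dots$ once $e^{2L}$ exceeds a fixed constant. The bound obtained is
\[
\det q_i\ \ge\ \kappa\, n^2(n^2-1)^2\,e^{4nL}
\]
with a constant $\kappa>0$ independent of $n$.

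\emph{Step 2: absorb the coupling term.} By AM--GM,
\[
4(n^2-1)W|cd|\le \eta\, c^2+\eta^{-1}4(n^2-1)^2W^2d^2 .
\]
I will take $\eta$ equal to half of the $c^2$-part of $q_1$ remaining after minimizing over $a$, namely $\det q_1/((n-1)^2n^2X)$. By Step 1 this is of order $(n+1)^2e^{2(n+1)L}$. I then have to check that
\[
\eta^{-1}(n^2-1)^2W^2\ \lesssim\ (n-1)^2e^{-2(n+1)L}e^{4L}
\]
is at most half of the $d^2$-part of $q_2$ remaining after minimizing over $b$. That remainder is $\det q_2/((n+1)^2n^2Y)$, which is of order $(n-1)^2e^{2(n-1)L}$. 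Comparing the exponents, $4L-2(n+1)L\le 2(n-1)L$ holds for every $n\ge 2$, with a gap of $e^{-4(n-1)L}$. So for $L$ beyond a threshold independent of $n$, the term $-4(n^2-1)cdW$ is absorbed, and the sum is $\ge 0$.

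\emph{Main obstacle.} The hard step is Step 1: obtaining the determinant bound with constants uniform in $n$. The relative surplus is only $2/n^2$, while the competing terms $\cosh 2(n\pm1)L$ carry a factor $n^2$. I would first bound the ratio $\cosh 2(n\pm1)L/\cosh 4nL\le 2e^{-2(n-1)L}$ and check that $n^2e^{-2(n-1)L}$ is small uniformly in $n\ge2$ for $L\ge L_0$. If that bound is too crude for $n=2$, I would handle $n=2$ separately by an exact computation.
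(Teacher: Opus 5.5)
Your proposal is correct and is essentially the paper's proof. The paper also eliminates $a$ and $b$ within the two blocks, getting the same Schur complements $(n+1)^2D/X$ and $(n-1)^2D/Y$ with $D=(n^2+2)XY-n^2Z^2$ (your $\det q_i=n^2(n^2-1)^2D$), absorbs the $cd$-term, and reduces to $D>0$ and $D^2\ge 4XYW^2$ for $L\ge L_0$ uniformly in $n$; your AM--GM with halves needs $D^2\ge 16XYW^2$ instead, which the same gap $e^{-4(n-1)L}$ supplies. You are right that the statement only holds for large $L$ (the paper's proof also only gives $L\ge L_0$), but drop the claim for the whole range $\cosh 2L\ge 2$: for $n=2$ and $\cosh 2L=2$ one has $X=0$ while the $ac$-coefficient $24Z=120$ is nonzero, so the form is indefinite there.
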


\begin{proof}
The left-hand side of \eqref{eq:main} is a quadratic form in $\mathbf v=(a,b,c,d)$.
We diagonalize it by completing the square in three steps.

\medskip\noindent\textbf{Step 1: the $(a,c)$--block.}
The terms involving $a$ and $c$ are picked out from \eqref{eq:main}:
\[
\begin{aligned}
&\text{from $a^2$:}\quad (n-1)^2n^2X\,a^2,\\[2pt]
&\text{from $ac$:}\quad 2n^2(n^2-1)Z\,ac,\\[2pt]
&\text{from $c^2$:}\quad (n+1)^2(n^2+2)Y\,c^2.
\end{aligned}
\]
Set
\[
A:=(n-1)^2n^2X,\qquad
B:=n^2(n^2-1)Z,\qquad
C:=(n+1)^2(n^2+2)Y,
\]
so that the $(a,c)$--block reads $A a^2 + 2B ac + C c^2$.
Completing the square in $a$ gives
\[
A a^2 + 2B ac + C c^2
= A\Bigl(a + \frac{B}{A}c\Bigr)^2 + \Bigl(C - \frac{B^{2}}{A}\Bigr)c^{2}.
\]

Compute $B^{2}/A$:
\[
\frac{B^{2}}{A}
= \frac{n^{4}(n^{2}-1)^{2}Z^{2}}{(n-1)^{2}n^{2}X}
= \frac{n^{2}(n+1)^{2}Z^{2}}{X}.
\]

The remaining coefficient of $c^{2}$ is
\[
R_{c} := C - \frac{B^{2}}{A}
= (n+1)^{2}\Bigl((n^{2}+2)Y - \frac{n^{2}Z^{2}}{X}\Bigr).
\]
Set
\[
D=(n^2+2)XY-n^2Z^2.
\]
Then $(n^{2}+2)Y - n^{2}Z^{2}/X = D/X$, so
\[
R_{c} = \frac{(n+1)^{2}D}{X}.
\]

\medskip\noindent\textbf{Step 2: absorbing the $cd$--term.}
The terms containing $c$ are now $R_{c} c^{2}$, and \eqref{eq:main} also contains
the cross term $-4(n^{2}-1)W\,cd$ and the $d^{2}$ term $(n-1)^{2}(n^{2}+2)X\,d^{2}$.
Combine the $c^{2}$ and $cd$ terms by completing the square in $c$:
\[
R_{c}c^{2} - 4(n^{2}-1)W\,cd
= R_{c}\Bigl(c - \frac{2(n^{2}-1)W}{R_{c}}d\Bigr)^{2}
  - \frac{4(n^{2}-1)^{2}W^{2}}{R_{c}}\,d^{2}.
\]

The ``cost'' in $d^{2}$ simplifies as
\[
\frac{4(n^{2}-1)^{2}W^{2}}{R_{c}}
= \frac{4(n^{2}-1)^{2}W^{2}}{\frac{(n+1)^{2}D}{X}}
= \frac{4(n-1)^{2}W^{2}X}{D}.
\]

\medskip\noindent\textbf{Step 3: the $(b,d)$--block.}
After the first two steps, the remaining $d^{2}$ coefficient is the original one
minus the cost from Step~2:
\[
\begin{aligned}
R_{d} &:= (n-1)^{2}(n^{2}+2)X - \frac{4(n-1)^{2}W^{2}X}{D}\\[2pt]
      &= (n-1)^{2}X\Bigl((n^{2}+2) - \frac{4W^{2}}{D}\Bigr).
\end{aligned}
\]

Now we complete the square in $b$.  The terms involving $b$ are the $b^{2}$ term
$R_{b}\,b^{2} := (n+1)^{2}n^{2}Y\,b^{2}$ together with the cross term
$2n^{2}(n^{2}-1)Z\,bd$:
Since $R_b=(n+1)^2n^2Y$, completing the square gives
\[
R_{b} b^{2} + 2n^{2}(n^{2}-1)Z\,bd
= R_{b}\Bigl(b + \frac{(n-1)Z}{(n+1)Y}d\Bigr)^{2}
  - \frac{n^{2}(n-1)^{2}Z^{2}}{Y}\,d^{2}.
\]

Thus the $(b,d)$--block contributes
\[
R_{b}\Bigl(b + \frac{(n-1)Z}{(n+1)Y}d\Bigr)^{2}
+ \Bigl(R_{d} - \frac{n^{2}(n-1)^{2}Z^{2}}{Y}\Bigr)d^{2}.
\]

Compute the final $d^{2}$ coefficient:
\[
\begin{aligned}
R_{d} - \frac{n^{2}(n-1)^{2}Z^{2}}{Y}
&= (n-1)^{2}X\Bigl((n^{2}+2) - \frac{4W^{2}}{D}\Bigr) - \frac{n^{2}(n-1)^{2}Z^{2}}{Y}\\[2pt]
&= (n-1)^{2}\Bigl[X(n^{2}+2) - \frac{n^{2}Z^{2}}{Y}\Bigr] - (n-1)^{2}\frac{4XW^{2}}{D}.
\end{aligned}
\]

Note that $X(n^{2}+2) - n^{2}Z^{2}/Y = \frac{(n^{2}+2)XY - n^{2}Z^{2}}{Y} = D/Y$.  Hence
\[
R_{d} - \frac{n^{2}(n-1)^{2}Z^{2}}{Y}
= (n-1)^{2}\Bigl(\frac{D}{Y} - \frac{4XW^{2}}{D}\Bigr)
= (n-1)^{2}\,\frac{D^{2} - 4XYW^{2}}{DY}.
\]

\medskip\noindent\textbf{Conclusion.}
Collecting the three perfect squares and the final residual, the left-hand side of
\eqref{eq:main} equals
\begin{multline*}
(n-1)^{2}n^{2}X\Bigl(a+\frac{(n+1)Z}{(n-1)X}c\Bigr)^{2}
+\frac{(n+1)^{2}D}{X}\Bigl(c-\frac{2(n-1)WX}{(n+1)D}d\Bigr)^{2}\\
+(n+1)^{2}n^{2}Y\Bigl(b+\frac{(n-1)Z}{(n+1)Y}d\Bigr)^{2}
+(n-1)^{2}\frac{D^{2}-4XYW^{2}}{DY}\,d^{2}.
\end{multline*}
By Lemma~\ref{lem:Dpos} and Lemma~\ref{lem:D2} below, there exists a
universal constant $L_{0}>0$ such that for every $n\ge 2$ we have $D>0$ and
$D^{2}\ge 4XYW^{2}$ whenever $L\ge L_{0}$.  Hence every term in the above sum is
non-negative (each coefficient in front of a square is positive), and
\eqref{eq:main} follows.
\end{proof}

\begin{lem}\label{lem:Dpos}
There exists a universal constant $L_1>0$ such that for every integer $n\ge 2$
and every $L\ge L_1$,
\[
D=(n^2+2)XY-n^2Z^2>0.
\]
\end{lem}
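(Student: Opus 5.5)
The plan is to expand $D$ with product-to-sum identities and then check that the single highest exponential beats every remaining term, uniformly in $n$. Throughout, write $\cosh\alpha\cosh\beta=\tfrac12\bigl(\cosh(\alpha+\beta)+\cosh(\alpha-\beta)\bigr)$.

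First expand the product $XY$:
\[
XY=\tfrac12\bigl(\cosh 4nL+\cosh 4L\bigr)-2\cosh 2(n-1)L-2\cosh 2(n+1)L+4 .
\]
Next expand the square $Z^2$:
\[
Z^2=\tfrac12\bigl(\cosh 4nL+1\bigr)-4\cosh 2nL+4 .
\]
Substituting both into $D$ and collecting terms should give
\[
D=\cosh 4nL+\tfrac{n^2+2}{2}\cosh 4L+4n^2\cosh 2nL
-2(n^2+2)\bigl(\cosh 2(n-1)L+\cosh 2(n+1)L\bigr)+8-\tfrac{n^2}{2}.
\]
The key observation is that the coefficient of $\cosh 4nL$ is exactly $\tfrac{n^2+2}{2}-\tfrac{n^2}{2}=1$. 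This top term is therefore positive with coefficient independent of $n$.

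Now bound the lower-order terms. The constant $8-\tfrac{n^2}{2}$ is absorbed by $\tfrac{n^2+2}{2}\cosh 4L$, since $\cosh 4L\ge 1$ gives $\tfrac{n^2+2}{2}\cosh 4L\ge \tfrac{n^2}{2}$. The term $4n^2\cosh 2nL$ is nonnegative and can be dropped. It then suffices to show
\[
\cosh 4nL> 4(n^2+2)\cosh 2(n+1)L\ \ge\ 2(n^2+2)\bigl(\cosh 2(n-1)L+\cosh 2(n+1)L\bigr).
\]
Using $\tfrac12 e^{a}\le\cosh a\le e^{a}$ for $a\ge 0$, the first inequality follows from
\[
e^{2(n-1)L}>8(n^2+2).
\]

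This last uniform-in-$n$ inequality is the only real point, and I expect it to be the main obstacle. I would handle it with the elementary bound $n^2+2\le 6e^{\,n-1}$, valid for all $n\ge 2$. It then suffices to have $e^{(2L-1)(n-1)}>48$. Since $n-1\ge 1$, this holds as soon as $2L-1>\ln 48$, so $L_1=\tfrac12(1+\ln 48)$ works and the lemma follows.
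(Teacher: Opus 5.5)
Your proof is correct and is essentially the paper's argument in different notation. The paper substitutes $q=e^{-2L}$ and writes $D=\tfrac{q^{-2n}}{4}G_n(q)$; the constant term $(n^2+2)-n^2=2$ of $G_n$ plays the role of your coefficient $1$ on $\cosh 4nL$, and its uniform-in-$n$ bound $C(n^2+2)q^{n-1}$ on the remaining terms corresponds to your condition $e^{2(n-1)L}>8(n^2+2)$, with your use of the signs of the lower-order terms giving the explicit constant $L_1=\tfrac12(1+\ln 48)$.
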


\begin{proof}
Set $q=e^{-2L}\in(0,1)$.  Recall the definitions
\[
X=\cosh 2(n-1)L-2,\quad
Y=\cosh 2(n+1)L-2,\quad
Z=\cosh 2nL-2.
\]
Using $\cosh 2kL = \tfrac12(q^{-k}+q^{k})$ and $\cosh 2kL-1 = \tfrac12(q^{-k/2}-q^{k/2})^{2}$,
we obtain
\[
X=\frac{1}{2q^{n-1}}(1-4q^{n-1}+q^{2n-2}),\qquad
Y=\frac{1}{2q^{n+1}}(1-4q^{n+1}+q^{2n+2}),\qquad
Z=\frac{1}{2q^{n}}(1-4q^{n}+q^{2n}).
\]
Define the auxiliary polynomials $\alpha_k(q)=1-4q^{k}+q^{2k}$, so that
\[
X=\frac{\alpha_{n-1}(q)}{2q^{n-1}},\qquad
Y=\frac{\alpha_{n+1}(q)}{2q^{n+1}},\qquad
Z=\frac{\alpha_{n}(q)}{2q^{n}}.
\]

Now compute $D=(n^{2}+2)XY-n^{2}Z^{2}$:
\[
\begin{aligned}
D &= (n^{2}+2)\frac{\alpha_{n-1}(q)}{2q^{n-1}}\cdot\frac{\alpha_{n+1}(q)}{2q^{n+1}}
    - n^{2}\frac{\alpha_{n}(q)^{2}}{4q^{2n}}\\[4pt]
  &= \frac{1}{4q^{2n}}\Bigl[
     (n^{2}+2)\,\alpha_{n-1}(q)\,\alpha_{n+1}(q)
     - n^{2}\,\alpha_{n}(q)^{2}
     \Bigr].
\end{aligned}
\]

Define
\[
G_{n}(q):=(n^{2}+2)(1-4q^{n-1}+q^{2n-2})(1-4q^{n+1}+q^{2n+2})
-n^{2}(1-4q^{n}+q^{2n})^{2},
\]
which is precisely the bracket above.  Then $D=\frac{q^{-2n}}{4}G_{n}(q)$.  Since
$q^{-2n}/4>0$, the sign of $D$ is the same as the sign of $G_{n}(q)$.

We now verify $G_{n}(q)>0$ for all $n\ge 2$ and sufficiently small $q$.
Expanding the product defining $G_{n}(q)$, the constant term equals
$(n^{2}+2)-n^{2}=2$.  Every other term contains a positive power of $q$, and
the smallest such power is $q^{n-1}$.  Moreover, the coefficient of each
non-constant term is bounded in absolute value by $C_{0}(n^{2}+2)$ for some
universal constant $C_{0}$.  Since $0<q<1$ implies $q^{k}\le q^{n-1}$ for every
$k\ge n-1$, the sum of the absolute values of all non-constant terms is at most
$C(n^{2}+2)q^{n-1}$ for another universal constant $C$.  Consequently
\[
G_{n}(q)\ge 2-C(n^{2}+2)q^{n-1}.
\]

For $0<q\le\tfrac1{12}$ the sequence $(n^{2}+2)q^{n-1}$ is decreasing in
$n\ge 2$ (because $\frac{(n+1)^{2}+2}{n^{2}+2}\,q\le\frac{11}{6}\cdot
\frac1{12}<1$), hence its maximum over $n\ge 2$ is attained at $n=2$, giving
$(n^{2}+2)q^{n-1}\le 6q$.  Therefore
\[
G_{n}(q)\ge 2-6Cq\qquad\text{for all }n\ge 2.
\]
Choosing $q$ sufficiently small guarantees $G_{n}(q)>0$ for every $n\ge 2$.
Hence there exists a universal constant $L_{1}>0$ such that $G_{n}(q)>0$
whenever $L\ge L_{1}$, and the proof is complete.
\end{proof}

\begin{lem}\label{lem:D2}
There exists a universal constant $L_2>0$ such that for every integer $n\ge 2$
and every $L\ge L_2$,
\[
D^2\ge 4XYW^2.
\]
\end{lem}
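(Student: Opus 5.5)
The plan is to reuse the substitution $q=e^{-2L}\in(0,1)$ and the polynomials $\alpha_k(q)=1-4q^k+q^{2k}$ from the proof of Lemma~\ref{lem:Dpos}. This turns the inequality $D^2\ge 4XYW^2$ into a comparison of two polynomials in $q$. In those variables the left side has a constant term bounded away from zero, while the right side carries an explicit factor $q^{2n-2}$. Since $2n-2\ge 2$, that factor is small uniformly in $n\ge 2$.

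\emph{Step 1: rewrite both sides.} From the proof of Lemma~\ref{lem:Dpos} we have
\[
X=\frac{\alpha_{n-1}(q)}{2q^{n-1}},\quad Y=\frac{\alpha_{n+1}(q)}{2q^{n+1}},\quad D=\frac{G_n(q)}{4q^{2n}}.
\]
The same identity $\cosh 2kL-2=\alpha_k(q)/(2q^k)$ with $k=1$ gives $W=\alpha_1(q)/(2q)$. Hence
\[
D^2=\frac{G_n(q)^2}{16\,q^{4n}},\qquad 4XYW^2=\frac{\alpha_{n-1}(q)\,\alpha_{n+1}(q)\,\alpha_1(q)^2}{4\,q^{2n+2}}.
\]
Multiplying through by $16q^{4n}>0$, the claim becomes equivalent to
\[
G_n(q)^2\ \ge\ 4\,q^{2n-2}\,\alpha_{n-1}(q)\,\alpha_{n+1}(q)\,\alpha_1(q)^2 .
\]

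\emph{Step 2: lower bound for the left side.} The proof of Lemma~\ref{lem:Dpos} gives $G_n(q)\ge 2-6Cq$ for all $n\ge 2$ and all $0<q\le\frac1{12}$, with $C$ universal. Shrinking $q$ further, still uniformly in $n$, we may assume $G_n(q)\ge 1$, so $G_n(q)^2\ge 1$.

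\emph{Step 3: upper bound for the right side.} For $0<q<1$ and $k\ge1$ we have $\alpha_k(q)\le 1-4q^k+q^{k}=1-3q^k\le 1$. Also $\alpha_k(q)>0$ once $q^k<2-\sqrt3$, which holds for all $k\ge1$ as soon as $q<2-\sqrt3$. Thus all four $\alpha$-factors lie in $(0,1]$. Since $n\ge 2$ gives $q^{2n-2}\le q^2$, the right side is at most $4q^{2n-2}\le 4q^2$. This is $<1$ whenever $q<\frac12$.

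Combining, choose $q_0=\min\{\frac1{12},\,2-\sqrt3,\,q_1\}$, where $q_1$ makes $2-6Cq\ge1$, and set $L_2=-\frac12\log q_0$. Then for $L\ge L_2$ and every $n\ge2$, $G_n^2\ge 1>4q^2\ge$ RHS, which proves Lemma~\ref{lem:D2}. Together with Lemma~\ref{lem:Dpos} and $L_0=\max(L_1,L_2)$, this completes the proof of the proposition.

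There is no real obstacle here. The only point needing care is uniformity in $n$, and the factor $q^{2n-2}\le q^2$ together with the $n$-uniform bound $G_n\ge 2-6Cq$ handles it.
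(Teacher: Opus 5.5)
Your proof is correct and follows essentially the same route as the paper. Both use the substitution $q=e^{-2L}$ and the reduction to $G_n(q)^2\ge 4q^{2n-2}\alpha_{n-1}\alpha_{n+1}\alpha_1^2$. Both bound the right side by $4q^2$ using $\alpha_k\in(0,1]$, and both take the $n$-uniform lower bound on $G_n$ from Lemma~\ref{lem:Dpos}. Your thresholds ($G_n\ge 1$, $4q^2<1$) differ only cosmetically from the paper's ($G_n>\tfrac12$, $4q^2\le\tfrac1{36}$). You also justify that the $\alpha_k$ are positive, which the paper only asserts.
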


\begin{proof}
Set $q=e^{-2L}$ and define $\alpha_k(q)=1-4q^{k}+q^{2k}$ and
$\beta(q)=1-4q+q^{2}$ as before.  Recall from the proof of
Lemma~\ref{lem:Dpos} that
\[
X=\frac{\alpha_{n-1}(q)}{2q^{n-1}},\quad
Y=\frac{\alpha_{n+1}(q)}{2q^{n+1}},\quad
Z=\frac{\alpha_n(q)}{2q^{n}},\quad
W=\cosh 2L-2=\frac{\beta(q)}{2q}.
\]

We also have $D=\frac{q^{-2n}}{4}G_n(q)$, hence
\[
16q^{4n}D^{2}=G_n(q)^{2}.
\]

For the product $XYW^{2}$ we compute
\[
\begin{aligned}
16q^{4n}(4XYW^{2})
&= 64\,q^{4n}\cdot
\frac{\alpha_{n-1}(q)}{2q^{n-1}}\cdot
\frac{\alpha_{n+1}(q)}{2q^{n+1}}\cdot
\frac{\beta(q)^{2}}{4q^{2}} \\[4pt]
&= 64\,q^{4n}\cdot
\frac{\alpha_{n-1}(q)\,\alpha_{n+1}(q)\,\beta(q)^{2}}{16\,q^{2n+2}} \\[4pt]
&= 4q^{2n-2}\,\alpha_{n-1}(q)\,\alpha_{n+1}(q)\,\beta(q)^{2}.
\end{aligned}
\]

Therefore
\[
H_{n}(q):=16q^{4n}(D^{2}-4XYW^{2})
=G_{n}(q)^{2} - 4q^{2n-2}\,\alpha_{n-1}(q)\,\alpha_{n+1}(q)\,\beta(q)^{2}.
\]

Now bound the subtracted term.  For $0<q\le\tfrac1{12}$, each $\alpha_k(q)$
and $\beta(q)$ lies in $(0,1]$.  Hence
\[
4q^{2n-2}\,\alpha_{n-1}(q)\,\alpha_{n+1}(q)\,\beta(q)^{2}
\le 4q^{2n-2}
\le 4q^{2}
\le \frac1{36}.
\]

On the other hand, Lemma~\ref{lem:Dpos} shows that $G_{n}(q)\to 2$ uniformly
as $q\to 0$, so there exists $q_{0}>0$ such that $G_{n}(q)>\tfrac12$ for all
$n\ge 2$ and all $0<q<q_{0}$.  Consequently
\[
H_{n}(q) > \frac14-\frac1{36}=\frac{2}{9}>0.
\]

Thus $H_{n}(q)>0$, which is equivalent to $D^{2}>4XYW^{2}$.  Taking
$L_{2}>0$ large enough (so that $q=e^{-2L}<q_{0}$) completes the proof.
\end{proof}

\bibliographystyle{alpha}
\bibliography{ref}

@article {Chen-Lamm,
    AUTHOR = {Chen, Jingyi and Lamm, Tobias},
     TITLE = {A {B}ernstein type theorem for entire {W}illmore graphs},
   JOURNAL = {J. Geom. Anal.},
  FJOURNAL = {Journal of Geometric Analysis},
    VOLUME = {23},
      YEAR = {2013},
    NUMBER = {1},
     PAGES = {456--469},
      ISSN = {1050-6926,1559-002X},
   MRCLASS = {53A05 (35J61 53C42)},
  MRNUMBER = {3010288},
MRREVIEWER = {Andreas\ Gastel},
       DOI = {10.1007/s12220-011-9264-2},
       URL = {https://doi.org/10.1007/s12220-011-9264-2},
}

@article {ka,
    AUTHOR = {Kapouleas, Nikolaos},
     TITLE = {Complete embedded minimal surfaces of finite total curvature},
   JOURNAL = {J. Differential Geom.},
  FJOURNAL = {Journal of Differential Geometry},
    VOLUME = {47},
      YEAR = {1997},
    NUMBER = {1},
     PAGES = {95--169},
      ISSN = {0022-040X,1945-743X},
   MRCLASS = {53A10},
  MRNUMBER = {1601434},
MRREVIEWER = {Rabah\ Souam},
       URL = {http://projecteuclid.org/euclid.jdg/1214460038},
}

@article {Kuwert-Schatzer,
    AUTHOR = {Kuwert, Ernst and Sch\"atzle, Reiner},
     TITLE = {The {W}illmore flow with small initial energy},
   JOURNAL = {J. Differential Geom.},
  FJOURNAL = {Journal of Differential Geometry},
    VOLUME = {57},
      YEAR = {2001},
    NUMBER = {3},
     PAGES = {409--441},
      ISSN = {0022-040X,1945-743X},
   MRCLASS = {53C44},
  MRNUMBER = {1882663},
MRREVIEWER = {Shu-Cheng\ Chang},
       URL = {http://projecteuclid.org/euclid.jdg/1090348128},
}

@article {Langer,
    AUTHOR = {Langer, Joel},
     TITLE = {A compactness theorem for surfaces with {$L_p$}-bounded second
              fundamental form},
   JOURNAL = {Math. Ann.},
  FJOURNAL = {Mathematische Annalen},
    VOLUME = {270},
      YEAR = {1985},
    NUMBER = {2},
     PAGES = {223--234},
      ISSN = {0025-5831,1432-1807},
   MRCLASS = {53C42 (58G30)},
  MRNUMBER = {771980},
MRREVIEWER = {Renato\ Tribuzy},
       DOI = {10.1007/BF01456183},
       URL = {https://doi.org/10.1007/BF01456183},
}

@article {Li,
    AUTHOR = {Li, Yuxiang},
     TITLE = {Some remarks on {W}illmore surfaces embedded in {$\Bbb{R}^3$}},
   JOURNAL = {J. Geom. Anal.},
  FJOURNAL = {Journal of Geometric Analysis},
    VOLUME = {26},
      YEAR = {2016},
    NUMBER = {3},
     PAGES = {2411--2424},
      ISSN = {1050-6926,1559-002X},
   MRCLASS = {53A05 (53A30)},
  MRNUMBER = {3511481},
MRREVIEWER = {Changxiong\ Nie},
       DOI = {10.1007/s12220-015-9631-5},
       URL = {https://doi.org/10.1007/s12220-015-9631-5},
}

@article {Li-Yin,
    AUTHOR = {Li, Yuxiang and Yin, Hao},
     TITLE = {3-circle theorem for {W}illmore surface {I}},
   JOURNAL = {J. Lond. Math. Soc. (2)},
  FJOURNAL = {Journal of the London Mathematical Society. Second Series},
    VOLUME = {111},
      YEAR = {2025},
    NUMBER = {5},
     PAGES = {Paper No. e70165, 28},
      ISSN = {0024-6107,1469-7750},
   MRCLASS = {53A05 (53C21)},
  MRNUMBER = {4899930},
       DOI = {10.1112/jlms.70165},
       URL = {https://doi.org/10.1112/jlms.70165},
}

@article {Li-Yin-Zhou,
      title={3-circle Theorem for Willmore surfaces II--degeneration of the complex structure}, 
      author={Yuxiang Li and Hao Yin and Jie Zhou},
      journal = {arXiv preprint https://arxiv.org/abs/2411.06453},
      year={2024},
      eprint={2411.06453},
      archivePrefix={arXiv},
      primaryClass={math.DG},
      url={https://arxiv.org/abs/2411.06453}, 
}

@article {Meeks-Perez-Ros,
    AUTHOR = {Meeks, III, William H. and P\'erez, Joaqu\'in and Ros,
              Antonio},
     TITLE = {Local removable singularity theorems for minimal laminations},
   JOURNAL = {J. Differential Geom.},
  FJOURNAL = {Journal of Differential Geometry},
    VOLUME = {103},
      YEAR = {2016},
    NUMBER = {2},
     PAGES = {319--362},
      ISSN = {0022-040X,1945-743X},
   MRCLASS = {53A10 (53C42)},
  MRNUMBER = {3504952},
MRREVIEWER = {Giuseppe\ Tinaglia},
       URL = {http://projecteuclid.org/euclid.jdg/1463404121},
}

\end{document}